\pgfplotsset{compat=1.9}
\def\addlegendimage{\csname pgfplots@addlegendimage\endcsname}
\newcommand{\ircg}{\texttt{IR-CG}\xspace}
\newcommand{\cgbio}{\texttt{CG-BiO}\xspace}
\newcommand{\irpg}{\texttt{IR-PG}\xspace}
\newcommand{\bisg}{\texttt{Bi-SG}\xspace}
\newcommand{\nbblo}{\texttt{NB-BLO}\xspace}
\newcommand{\snblo}{\texttt{SNB-LO}\xspace}
\definecolor{gold}{rgb}{0.85,0.65,0}
\colorlet{dgreen}{green!60!black}
\def\beq{\begin{equation}}
\def\eeq{\end{equation}}
\def\fnote#1{\footnote}
\newcommand{\BlBox}{\rule{1.5ex}{1.5ex}}
\newcommand{\epr}{\hfill\BlBox\vspace{0.0cm}\par}
\newcommand{\grad}{\ensuremath{\nabla}}
\newcommand{\bbE}{\mathbb{E}}
\newcommand{\bbR}{\mathbb{R}}
\DeclareMathOperator{\opt}{opt}
\DeclareMathOperator*{\argmin}{arg\,min}
\DeclareMathOperator*{\argmax}{arg\,max}
\DeclareMathOperator{\Proj}{Proj}
\DeclareMathOperator{\Trace}{Trace}
\DeclareMathOperator{\Tr}{Tr}
\def\dim{\mathop{{\rm dim}\,}}
\DeclareMathOperator{\Conv}{Conv}
\DeclarePairedDelimiter\abs{\lvert}{\rvert}%
\let\oldabs\abs
\def\abs{\@ifstar{\oldabs}{\oldabs*}}
\theoremstyle{thmstyleone}%
\newtheorem{theorem}{Theorem}[section]%
\newtheorem{lemma}[theorem]{Lemma}%
\newtheorem{corollary}[theorem]{Corollary}%
\theoremstyle{thmstyletwo}%
\newtheorem{example}{Example}%
\newtheorem{remark}{Remark}%
\theoremstyle{thmstylethree}%
\newtheorem{assumption}{Assumption}%
\newtheorem{condition}{Condition}[section]%
\begin{document}
	
\title[Projection-Free Convex Bilevel Optimization]{A Projection-Free Method for Solving Convex Bilevel Optimization Problems}

	\author[1,2]{\fnm{Khanh-Hung} \sur{Giang-Tran}}\email{tg452@cornell.edu}
	
	\author[1]{\fnm{Nam} \sur{Ho-Nguyen}}\email{nam.ho-nguyen@sydney.edu.au}
	
	\author[3]{\fnm{Dabeen} \sur{Lee}}\email{dabeenl@kaist.ac.kr}
	
	\affil[1]{\orgdiv{Discipline of Business Analytics}, \orgname{The University of Sydney}}
	\affil[2]{\orgdiv{School of Operations Research and Information Engineering}, \orgname{Cornell University}}
	\affil[3]{\orgdiv{Department of Industrial and Systems Engineering}, \orgname{KAIST}}

\abstract{When faced with multiple minima of an \emph{inner-level} convex optimization problem, the \emph{convex bilevel optimization} problem selects an optimal solution which also minimizes an auxiliary \emph{outer-level} convex objective of interest. Bilevel optimization requires a different approach compared to single-level optimization problems since the set of minimizers for the inner-level objective is not given explicitly. In this paper, we propose a new projection-free conditional gradient method for convex bilevel optimization which requires only a linear optimization oracle over the base domain. We establish $O(t^{-1/2})$ convergence rate guarantees for our method in terms of both inner- and outer-level objectives, and demonstrate how additional assumptions such as quadratic growth and strong convexity result in accelerated rates of up to $O(t^{-1})$ and $O(t^{-2/3})$ for inner- and outer-levels respectively. Lastly, we conduct a numerical study to demonstrate the performance of our method.}

	\keywords{bilevel optimization, projection-free, conditional gradient}

	\pacs[MSC Classification]{90C06, 90C25, 90C30}
	
	\maketitle

	\section{Introduction}\label{sec:intro}
In this paper, we study the following \emph{convex bilevel optimization} problem:
\begin{equation} \label{general simple bilevel problem}
\begin{aligned}
\min_{x \in X} \quad  f(x)\quad \text{s.t.} \quad  x \in X_{\opt}:=\argmin_{z \in X} g(z).
\end{aligned}
\end{equation}
We refer to $g$ as the \emph{inner-level objective function}, and when $g$ and $X$ are convex, $X_{\opt}$ is also convex. We also assume that $f$, the \emph{outer-level objective function}, is convex, which makes \eqref{general simple bilevel problem} a convex optimization problem. We define the inner- and outer-level optimal values as follows:
\begin{equation}\label{eq:optimal-value-def}
        g_{\opt} := \min_{x \in X} g(x), \quad f_{\opt} := \min_{x \in X_{\opt}} f(x).
\end{equation}

Problem \eqref{general simple bilevel problem} is trivial if $X_{\opt}$ is a singleton, i.e., the inner-level objective function has a unique solution over $X$. However, multiple optimal solutions can arise in many practical applications, and \eqref{general simple bilevel problem} may be used to select a solution satisfying auxiliary desirable properties. For example, %
given an underdetermined least squares problem (with inner-level objective $g(x) = \|Ax-b\|_2^2/2$%
), one may seek to find a minimizer of $g$ that also minimizes $f(x) = \|x\|_2^2/2$. %
The solution to this problem is the so-called least norm solution, which admits a closed form $x^* = A^\dagger b$. However, %
a separate numerical method is often required for other types of functions $f$, for which no such closed form solution exists.
Other applications of %
\eqref{general simple bilevel problem} %
include %
dictionary learning, fair classification \citep[Section 2.1]{JiangEtAl2023}, and ill-conditioned inverse problems \citep[Section 5.2]{BeckSabach2014}. 

There are two primary challenges in solving \eqref{general simple bilevel problem}.
First, we do not have an explicit representation of the optimal set $X_{\opt}$ in general, which prevents us from using some common %
operations in optimization such as projection onto or linear optimization %
over $X_{\opt}$. Instead, %
we alternatively consider the \emph{value function formulation} of \eqref{general simple bilevel problem}:
\begin{equation} \label{recasted simple bilevel}
\begin{aligned}
\min_{x \in X} \quad  f(x)\quad
\text{s.t.} \quad  g(x) \leq g_{\opt}.
\end{aligned}
\end{equation}
The second challenge arises because, by the definition of $g_{\opt}$, there exists no $x \in X$ such that $g(x) < g_{\opt}$, hence \eqref{recasted simple bilevel} does not satisfy %
Slater's condition, which means that the Lagrangian dual of \eqref{recasted simple bilevel} may not be solvable. One may attempt to enforce Slater's %
condition by adding a small $\epsilon_g > 0$ to the right-hand side of the constraint so that we consider $g(x) \leq g_{\opt} + \epsilon_g$ instead, but this approach does not solve the actual problem \eqref{recasted simple bilevel}, and may introduce numerical instability \citep[Appendix D]{JiangEtAl2023}.

\subsection{Related literature} \label{subsec:literature}
Several schemes have been devised to tackle the convex bilevel optimization problem \eqref{general simple bilevel problem}. These methods can be grouped into three categories: %
regularization-based, sublevel set, and sequential averaging methods.

\subsubsection{Regularization approach}
This approach combines the inner- and outer-level objectives via the so-called Tikhonov regularization~\cite{tikhonov1977solutions}, i.e., we optimize $\sigma f(x)+g(x)$, where $\sigma >0$ is referred to as the %
regularization parameter. For the setting where $f(x)=\|x\|_2^2$, the Tikhonov regularization scheme converges to the least norm solution asymptotically. Moreover, under some mild conditions, \citet{FriedlanderEtAl2008} showed that for a sufficiently small $\sigma > 0$, the optimal set of the regularized problem $\argmin_{x \in X} \{\sigma f(x)+g(x)\}$ is the same as that of \eqref{general simple bilevel problem}. \citet{FriedlanderEtAl2008} showed that the existence of such $\sigma > 0$ is equivalent to the solvability of the Lagrangian dual of \eqref{recasted simple bilevel}. %
However, the threshold for $\sigma$ is a priori unknown. As an alternative, if we consider a positive sequence of regularization parameters $\{\sigma_t\}_{t \geq 0}$ converging to $0$ and define $s_t \in \argmin_x \{\sigma_t f(x) + g(x) : x \in X\}$ for each $t \geq 0$, then it is known that any accumulation point of $\{s_t\}_{t \geq 0}$ is a solution of \cref{general simple bilevel problem}. %
On the other hand, this requires solving $\min_{x \in X} \{\sigma_t f(x) + g(x)\}$ for each $t$, which can be expensive depending on how fast the sequence of regularization parameters converges. %

A more efficient strategy is to employ only 
cheap first-order updates each time we update $t$. To the best of our knowledge, this idea dates back to \citet{Cabot2005}, who proposed a proximal point-type algorithm to update %
solutions for the unconstrained case, i.e., $X=\bbR^n$. %
\citet{Cabot2005} showed that the iterates converge asymptotically to the optimal solution set. \citet{DuttaPandit2020} extended the %
framework of \citet{Cabot2005} to %
the case of general closed convex $X$. However, \citep{Cabot2005,DuttaPandit2020} did not provide non-asymptotic convergence rates for their methods.%

Note that a proximal point update %
involves solving a possibly expensive optimization problem. %
As an alternative to proximal point-type methods, \citet{Solodov2006} proposed the iterative regularized projected gradient (\irpg) method, where only a projected gradient step is taken at each iteration $t$. \citet{Solodov2006} proved %
asymptotic convergence for the case when $f$ and $g$ are smooth, under some %
appropriate selection of relevant parameters, but this work did not include a non-asymptotic convergence rate for \irpg. %
When $f$ and $g$ are possibly non-smooth, \citet{HelouSimoes2017} proposed a variation of the $\epsilon$-subgradient method with asymptotic convergence provided that $f$ and $g$ are Lipschitz continuous.%

By choosing the regularization and other relevant parameters appropriately, \citet{AminiEtAl2019} proved that the convergence rate of \irpg for the inner-level objective is $O(t^{-(1/2-b)})$ for any fixed $b \in (0,1/2)$ when $f$ and $g$ are non-smooth, but with the additional requirement is that $X$ is compact and $f$ is strongly convex. %
\citet{KaushikYousefian2021} later refined the analysis and %
removed the strong %
convexity assumption on $f$, and moreover, they proved that \irpg admits 
convergence rates of $O(t^{-b})$ and $O(t^{-(1/2-b)})$ for inner- and outer-level objectives, respectively for any $b\in(0,1/2)$. %
In fact, \citet{KaushikYousefian2021} studied a more general setting than~\eqref{general simple bilevel problem} where $X_{\opt}$ is given by the set of solutions to a monotone variational inequality.
\citet{Malitsky2017} studied a version of Tseng’s accelerated gradient method \citep{Tseng2008} applied to~\eqref{general simple bilevel problem} with a convergence rate of $o(t^{-1})$ for the inner-level objective.

Recently, \citet{ShenLingqing2023} proposed two primal-dual-type algorithms in which $\sigma_t$ is adaptively adjusted. %
One %
algorithm works under some minimal convexity assumptions and converges with a %
rate of $O\left(t^{-1/3}\right)$ for both inner- and outer-level objectives. %
The other %
algorithm %
utilizes more structural information about %
the objective functions such as smoothness and strong convexity and %
converges with a %
rate of $O\left(t^{-1/2}\right)$ for both inner- and outer-level objectives. Nevertheless, the algorithms of \citet{ShenLingqing2023} %
demand a tolerance parameter to be set in advance, and therefore no asymptotic convergence was claimed.

\subsubsection{Sublevel %
set method} \label{subsubsec:sub-level} 

Another strategy is to relax the constraint $x\in X_{\opt}$ in \eqref{general simple bilevel problem} by replacing the set of optimal solutions $X_{\opt} = \{x \in X : g(x) \leq g_{\opt}\}$ with an approximation. %
For instance, the minimal norm gradient (MNG) method \citep{BeckSabach2014} constructs an outer approximation of $X_{\opt}$ with two half-spaces over which it %
minimizes $f$, which is assumed to be strongly convex and smooth. The MNG method converges with %
rate %
$O\left(t^{-1/2}\right)$ for the inner-level objective when $g$ is smooth, but no rates were provided for the outer-level objective.

\citet{JiangEtAl2023} introduced the conditional gradient-based 
bilevel optimization (\cgbio) method which approximates $X_{\opt}$ by replacing $g(x)$ with its %
linear approximation. %
They proved a convergence rate of %
$O(t^{-1})$ for both inner- and outer-level objectives when $f,g$ are smooth and $X$ is compact. Recently, \citet{CaoEtAl2023} extended this to the stochastic setting where $f$ and $g$ are given by %
$f(x) := \bbE_\theta[\tilde{f}(x,\theta)]$ and $g(x) := \bbE_\xi[\tilde{g}(x,\xi)]$. When the distributions of %
$\theta$ and $\xi$ have finite support, \citet{CaoEtAl2023} showed a convergence rate of $\tilde{O}(t^{-1})$ for both objectives. Alternatively, if %
$\theta$ and $\xi$ are sub-Gaussian, the method of \citet{CaoEtAl2023} achieves a convergence rate of %
$O\left(t^{-1/2}\right)$. We remark that the frameworks of %
\citet{JiangEtAl2023} and \citet{CaoEtAl2023} both require a predetermined tolerance parameter %
$\epsilon_g>0$ %
based on which asymptotic $(\epsilon_g/2)$-infeasibility guarantee can be established.

Instead of approximating $X_{\opt}$, \citet{DoronShtern2022} provided an alternative formulation of \eqref{general simple bilevel problem} which relies on sublevel %
sets of the outer-level %
objective $f$. Based on this, they developed a method called the iterative approximation and level-set expansion (ITALEX) method \citep{DoronShtern2022}, which %
performs a proximal gradient or generalized conditional gradient step based on a surrogate $\hat{g}_t$ of $g$ over a sublevel set defined as $X \times \{x\in\mathbb{R}^n:f(x) \leq \alpha_t\}$ at each iteration $t$. The surrogate $\hat{g}_t$ and the sublevel set are then updated. 
\citet{DoronShtern2022} 
showed %
convergence rates of $O(t^{-1})$ and $O\left(t^{-1/2}\right)$ for the inner- and outer-level objectives, respectively, when $g$ is a composite function, and $f$ satisfies an error bound condition. %

\subsubsection{Sequential averaging}

Sequential averaging, as its name suggests, proceeds by taking a weighted average of two mappings computed from the current iterate $x_t$ to deduce the next iterate $x_{t+1}$. %
For instance, the bilevel gradient sequential averaging method (BiG-SAM) \citep{SabachEtAl2017} takes a convex combination between a proximal gradient step for the inner-level objective and a gradient step applied to the outer-level function from the current iterate. \citet{SabachEtAl2017} showed asymptotic convergence for $f$ without a convergence rate, and convergence at rate $O(t^{-1})$ for $g$, when $f$ is smooth and strongly convex, and $g$ is a composite smooth function. \citet{ShehuEtAl2021} presented the inertial bilevel gradient sequential averaging method (iBiG-SAM), a variation of the BiG-SAM with an inertial extrapolation step. Although \citet{ShehuEtAl2021} showed asymptotic convergence of the iBiG-SAM without a convergence rate %
under the same assumptions of the BiG-SAM, several numerical examples that were conducted in the study indicated that the iBiG-SAM outperformed the BiG-SAM in those experiments.

\citet{MerchavSabach2023} proposed the bi-sub-gradient (\bisg) method for the case when $f$ is non-smooth and $g$ is composite smooth. At each iteration, a proximal gradient step for $g$ is applied, followed by a subgradient descent step for $f$. \citet{MerchavSabach2023} showed %
convergence rates of $O(t^{-\alpha})$ and $O(t^{-(1-\alpha)})$ for the inner- and outer-level objectives, respectively, with $\alpha \in (1/2,1)$ when $f$ satisfies a quasi-Lipschitz property. %
Moreover, if we further assume that $f$ is composite smooth with the smooth part being strongly convex as well and the non-smooth part of $g$ is Lipschitz continuous, then \bisg guarantees a convergence rate of $O(e^{-c(\beta/4)t^{1-\alpha}})$ for the outer objective where $c,\beta>0$ are some relevant parameters.

\subsection{Contributions}\label{subsec:contribution}

For certain convex domains $X$, linear optimization oracles can be implemented more efficiently than projection operations. For example, in a low-rank matrix completion problem, when $X$ is a nuclear norm ball that consists of $n\times p$ matrices, projection onto $X$ requires a complete singular value decomposition of an $n\times p$ matrix, whereas a linear optimization oracle  requires solving only a maximum singular value problem.

In light of this, %
we present an %
iterative method for solving \eqref{general simple bilevel problem} that requires only a linear optimization oracle over the convex domain $X$ at each iteration. Our method takes the equivalent formulation \eqref{recasted simple bilevel} with the assumption that $X$ is compact and $f, g$ are smooth convex functions. %
Our contributions are summarized in \Cref{results} and as follows:

\begin{itemize}
    \item In \cref{sec:regularized}, we propose what we call the iteratively regularized conditional gradient (\ircg) method, which %
    takes the regularization approach to solve \eqref{general simple bilevel problem}. Unlike other previous regularization-based approaches, we utilize a novel averaging scheme that is necessitated by %
    the conditional gradient updates. We provide conditions on the regularization parameters %
    to ensure asymptotic convergence, as well as convergence rates of $O\left(t^{-p}\right)$ and $O(t^{-(1-p)})$ for the inner- and outer-level objectives, respectively, for any $p \in (0,1)$.

    \item In \cref{sec:acceleration}, we prove that \ircg achieves faster rates of convergence under some additional assumptions. To be more precise, assuming that $g$ exhibits quadratic growth, \ircg attains $O\left(t^{-\min\{1,2p\}} \right)$ for the inner-level gap and $O\left(t^{-\min\{p,1-p\}}\right)$ for the outer-level gap. In addition, if we also assume that $f$ and $X$ are strongly convex, the rate for the outer-level gap can be further accelerated to $O\left(t^{-\min\{2/3,1-2p/3,2-2p\}}\right)$. We show that the best choice for the parameter $p$ is $1/2$.

    \item In \cref{sec:experiments}, we present computational results for a numerical experiment on the matrix completion problem. We compare the performance of \ircg to that of existing methods including \irpg \citep{Solodov2006}, \bisg \citep{MerchavSabach2023} and \cgbio \citep{JiangEtAl2023}. The numerical results demonstrate that \ircg dominates the existing methods, validating our theoretical results.
\end{itemize}
\begin{table*}[!ht]
\begin{center}
\begin{tabular}{c|c|c|c}
\toprule
{\bf Setting} & {\bf Inner-level}   & {\bf Outer-level} & {\bf Results}\\
\midrule
convex $f,g,X$  & $O\left(t^{-p}\right)$ & $O\left(t^{-(1-p)}\right)$ & \Cref{theorem: a convergence rate for PCG-BiO}\\
\midrule
convex $f,g,X$,   & \multirow{2}{*}{$O\left(t^{-\min\{2p,1\}}\right)$} & \multirow{2}{*}{$O\left(t^{-\min\{p,1-p\}}\right)$} & \multirow{2}{*}{\Cref{thm:accelerated-quadratic-growth}} \\
quadratic growth of $g$ & & \\
\midrule
convex $g$,   & \multirow{3}{*}{$O\left(t^{-\min\{2p,1\}}\right)$} & \multirow{3}{*}{$O\left(t^{-\min\{\frac{2}{3},1-\frac{2p}{3},2-2p\}}\right)$} & \multirow{3}{*}{\Cref{thm:accelerated-strongly-convex}}\\
strongly convex $f,X$, & & \\
quadratic growth of $g$ & & \\
\bottomrule
\end{tabular}
\end{center}
\caption{Summary of our results on convergence rates of \ircg.}\label{results}
\end{table*}

We note that the \cgbio method of \citet{JiangEtAl2023} as well as the projection-free variant of the ITALEX method of \citet{DoronShtern2022} also utilizes linear optimization oracles to solve \eqref{general simple bilevel problem}. At each iteration, our method \ircg only requires linear optimization oracles over the base domain $X$. In contrast, \cgbio requires at each iteration a linear optimization oracle over $X \cap H_t$ where $H_t$ is some half-space, which can be significantly more complicated than a linear optimization oracle over $X$; in Appendix~\ref{sec:appendix-implementation} we examine this further for the matrix completion example. ITALEX requires a linear optimization oracle over sublevel 
sets $X\times \{x\in\mathbb{R}^n : f(x) \leq \alpha_t\}$ %
in addition to one over $X$. While some functions $f$ admit simple linear optimization oracles over their sublevel %
sets, another assumption %
to ensure convergence for ITALEX is that the sublevel %
sets of $f$ are bounded, which is often restrictive in practice. %
In fact, the problem instance for our numerical experiments in \cref{sec:experiments} gives rise to unbounded sublevel sets.

	\section{Iteratively regularized conditional gradient method}\label{sec:regularized}

In this section, we describe an iterative regularization approach to solve problem \eqref{recasted simple bilevel}. 
The main difficulty in solving \eqref{recasted simple bilevel} comes from dealing with the functional constraint $g(x) \leq g_{\opt}$. %
A common way to handle a functional constraint is by considering 
the Lagrangian
$$L(x, \lambda) := f(x)+\lambda (g(x)-g_{\opt}), \quad x \in X, \lambda \geq 0,$$
and the associated Lagrangian dual problem $\sup_{\lambda \geq 0} \min_{x \in X} L(x,\lambda)$. Under \cref{ass:smooth-functions} below, by Sion's minimax theorem \citep[Theorem 3.4]{Maurice1958}, strong duality holds, i.e.,
$$\sup_{\lambda \geq 0} \min_{x \in X} L(x,\lambda) = \min_{x \in X} \sup_{\lambda \geq 0} L(x,\lambda).$$
Note that given $x \in X$, we have
\begin{align*}
    \sup_{\lambda \geq 0} L(x,\lambda) =\begin{cases}
    f(x), & x \in X_{\opt}\\
    +\infty, & x \in X \setminus X_{\opt},
    \end{cases}
\end{align*}
and therefore $\min_{x \in X} \sup_{\lambda \geq 0} L(x,\lambda)$ is equivalent to %
\eqref{recasted simple bilevel}, hence
\begin{equation}\label{minimax}
\sup_{\lambda \geq 0} \min_{x \in X} L(x,\lambda) = f_{\opt}.
\end{equation}

While strong duality holds, the dual is not always \emph{solvable} in general. %
Hence, we do not know whether there exists a finite $\lambda_{\opt} \geq 0$ such that $\min_{x \in X} L(x,\lambda_{\opt}) = f_{\opt}$ and $X_{\opt}  \subseteq  \argmin_{x \in X} L(x,\lambda_{\opt})$ or not. However, since $L(x,\lambda)$ is non-decreasing in $\lambda$ for each $x \in X$, regardless of the existence of $\lambda_{\opt}$, we must have that
\begin{equation*}\label{eq:limit}
	\sup_{\lambda \geq 0} \min_{x \in X} L(x,\lambda) = \lim_{\lambda \to \infty} \left(\min_{x \in X}L(x,\lambda)\right) = f_{\opt}.
\end{equation*}
This suggests that we consider a sequence of multipliers $\lambda_t$ and their associated solutions $x_t \in \argmin_{x \in X} L(x,\lambda_t)$ with $\lambda_t \to \infty$. %
Notice that while $L(x,\lambda)$ contains $g_{\opt}$, which is generally unknown a priori, obtaining $x_t$ does not require knowledge of $g_{\opt}$ at all. That said, obtaining $x_t$ by optimizing $L(x,\lambda_t)$ may still be expensive. \citet{Solodov2006} proposed the \emph{iteratively regularized projected gradient} (\irpg) method which performs a single projected gradient step to obtain $x_{t+1}$, i.e., 
$$x_{t+1} := \Proj_X\left(x_t - \frac{\alpha_t}{\lambda_t} \grad_x L(x_t,\lambda_t) \right) = \Proj_X\left(x_t - \alpha_t \left(\frac{1}{\lambda_t} \grad f(x_t) + \grad g(x_t)\right) \right).$$ 
\citet[Theorem 3.2]{Solodov2006} showed that if $\lambda_t \to \infty$ sufficiently slowly (in the sense that $\sum_{t \geq 0} 
1/\lambda_t = \infty$), then the Euclidean distance between $x_t$ and the solution set of the convex bilevel optimization problem \eqref{general simple bilevel problem} converges to $0$.

Inspired by the result of Solodov, we propose what we call the \emph{iteratively regularized conditional gradient} (\ircg) method, outlined in \cref{alg:IR-CG} below, which essentially replaces the projection step with a conditional gradient-type step. To simplify the analysis, it is convenient to define 
\begin{equation}\label{eq:phi_t}
	\sigma_t:= \frac{1}{\lambda_t}, \quad \Phi_t(x) := \frac{1}{\lambda_t} L(x, \lambda_t)+g_{\opt} = \sigma_t f(x) + g(x).
\end{equation}
The sequence $\{\sigma_t\}_{t \geq 0}$ are referred to as \emph{regularization parameters}. Based on the discussion above, we impose the following condition.

\begin{condition}\label{IRCG-C1} 
	The regularization parameters $\{\sigma_t\}_{t \geq 0}$ are strictly decreasing, positive, and converge to $0$. 
\end{condition}

\begin{algorithm}[htpt] \caption{Iteratively regularized conditional gradient (\ircg) algorithm.} \label{alg:IR-CG}
	\KwData{Parameters $\{\alpha_t\}_{t \geq 0} \subseteq [0,1]$,$ \{\sigma_t\}_{t\geq 0} \subseteq \mathbb{R}_{++}$.}
	\KwResult{sequences $\{x_t,z_t\}_{t \geq 1}$.}
        Initialize $x_0 \in X$\;
        \For{$t=0,1,2,\ldots$}{
        Compute
        \begin{subequations}\label{eq:ircg-iter}
       	\begin{align}
        v_t &\in \argmin_{v \in X}\{(\sigma_t \grad f(x_t) + \grad g(x_t))^\top v\}\\
        x_{t+1} &:= x_t+\alpha_t(v_t-x_t)\\
        S_{t+1} &:= (t+2)(t+1)\sigma_{t+1} + \sum_{i\in [t+1]}(i+1)i(\sigma_{i-1}-\sigma_i)\label{S}\\
        z_{t+1} &:= \frac{(t+2)(t+1)\sigma_{t+1} x_{t+1}+ \sum_{i\in [t+1]}(i+1)i(\sigma_{i-1}-\sigma_i)x_i}{S_{t+1}}.\label{z}
        \end{align}
        \end{subequations}
        \vspace{-10pt}
        }	
\end{algorithm}

\noindent In \cref{alg:IR-CG}, the $z_t$ terms given by~\eqref{z} are taken as a convex combination of $x_1,\ldots,x_t$, which are the usual terms in the conditional gradient algorithm, while the sum of the weights is $S_t$ given by~\eqref{S}. While we give an explicit bound for the inner-level optimality gap of $x_t$ (\cref{lemma:lastiterate}) and show asymptotic convergence of the outer-level optimality gap (\cref{theorem:asymptotic}), by instead using $z_t$, we obtain explicit bounds for both inner- and outer-levels (\cref{theorem: a convergence rate for PCG-BiO}). This specific choice of convex combinations naturally arises in our analysis (particularly  \cref{lemma:recursive-rule} and \cref{lemma: common bound}).

\begin{remark}\label{rem:recursion}
By setting $S_0:=0$ and $z_0:=0$, for $t\geq 1$, $S_t$ and $z_t$ defined in \eqref{S} and \eqref{z} %
can be efficiently computed by the following recursive formulae:
\begin{align*}
	S_{t+1} = S_t+2(t+1) \sigma_t, \quad z_{t+1} = \frac{S_t z_t-(t+1)t\sigma_t x_t+(t+2)(t+1)\sigma_t x_{t+1}}{S_{t+1}}.
\end{align*}
\epr
\end{remark}

Before analyzing \cref{alg:IR-CG}, we introduce the following convexity and smoothness conditions on the structure of \eqref{general simple bilevel problem}, which are standard in conditional gradient methods.

\begin{assumption}\label{ass:smooth-functions}
Let $\| \cdot \|$ be an arbitrary norm on $\mathbb{R}^n$  and $\| \cdot \|_*$ be its dual norm. We consider the following conditions on $f$, $g$, and $X$:
\begin{enumerate}[label=(\alph*), ref=\cref{ass:smooth-functions}(\alph*)]
\item\label{ass:X-compact} $X \subseteq \mathbb{R}^n$ is convex and compact with diameter $D < \infty$, i.e., $\|x-y\|\leq D$ for any $x,y \in X$.
\item \label{ass:g-smooth}$g$ is convex and $L_g$-smooth on an open neighbourhood of $X$, i.e., it is continuously differentiable and its derivative is $L_g$-Lipschitz: $$\|\grad g(x) - \grad g(y)\|_* \leq L_g \|x-y\|,$$ 
for any $x,y \in X$.
\item \label{ass:f-smooth} $f$ is convex and $L_f$-smooth on an open neighbourhood of $X$.
\end{enumerate}
\end{assumption}

Define $\bar{\alpha}_t:=2/(t+2)$ for each $t \geq 0$. We provide the convergence analysis for \cref{alg:IR-CG} under three different step size selection schemes, which are 
\begin{enumerate}
    \item \emph{open loop:}
\begin{equation} \label{eq:openloop}
    \begin{aligned}
        \alpha_t = \bar{\alpha}_t = \frac{2}{t+2}, \quad \forall t \geq 0,
    \end{aligned}
\end{equation}
\item  \emph{closed loop:}
\begin{equation} \label{eq:inexactlinesearch}
    \begin{aligned}
        \alpha_t \in \argmin_{\alpha \in [0,1]} \left\{\alpha \nabla \Phi_t(x_t)^\top (v_t-x_t)+\frac{(\sigma_tL_f+L_g)\alpha^2}{2}\|v_t-x_t\|^2\right\},  \ \forall t \geq 0,
    \end{aligned}
\end{equation}
\item \emph{line search:}
\begin{equation} \label{eq:exactlinesearch}
	\begin{aligned}
		\alpha_t \in \argmin_{\alpha \in [0,1]} \Phi_t(x_t+\alpha(v_t-x_t)),  \quad \forall t \geq 0.
	\end{aligned}
\end{equation}
\end{enumerate}

\begin{lemma} \label{lemma:recursive-rule}
Let $\{x_t\}_{t \geq 0}$ denote the iterates generated by \cref{alg:IR-CG} with step sizes $\{\alpha_t\}_{t\geq 0}$ chosen via \eqref{eq:inexactlinesearch} or \eqref{eq:exactlinesearch}. If \cref{ass:smooth-functions} and \cref{IRCG-C1} hold, then for each $t \geq 0$ and any $\alpha \in [0,1]$,
\begin{align}\label{ineq1:lemma:recursive-rule}
    \begin{aligned}
        &\Phi_t(x_{t+1})-\Phi_t(x_{\opt}) \\
        &\leq \Phi_t(x_t)-\Phi_t(x_{\opt}) + \alpha \nabla \Phi_t(x_t)^\top (v_t-x_t)+\frac{(\sigma_t L_f+L_g)\alpha^2}{2}\|v_t-x_t\|^2.
    \end{aligned}
\end{align}
Furthermore, when $\{\alpha_t\}_{t\geq 0}$ are chosen via any one of \cref{eq:openloop,eq:exactlinesearch,eq:inexactlinesearch}, for each $t \geq 0$,
    \begin{equation}\label{ineq2:lemma:recursive-rule}
    \Phi_t(x_{t+1})-\Phi_t(x_{\opt}) \leq \left(1-\frac{2}{t+2}\right)(\Phi_t(x_t)-\Phi_t(x_{\opt}))+\frac{2(L_f \sigma_t+L_g)D^2}{(t+2)^2}.
    \end{equation}
\end{lemma}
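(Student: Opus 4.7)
The proof is a fairly standard conditional-gradient descent lemma, adapted to the regularized objective $\Phi_t = \sigma_t f + g$. My plan is to first exploit the key fact that $\Phi_t$ is smooth with constant $\sigma_t L_f + L_g$ (a direct consequence of \cref{ass:f-smooth,ass:g-smooth}) and convex, then handle the three step-size rules uniformly by comparing against the open-loop step $\bar\alpha_t = 2/(t+2)$.

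For \eqref{ineq1:lemma:recursive-rule}, I would start from the descent lemma applied to $\Phi_t$ at $x_t$: since $x_{t+1} = x_t + \alpha_t(v_t - x_t)$, smoothness gives
\[
\Phi_t(x_{t+1}) \leq \Phi_t(x_t) + \alpha_t \nabla\Phi_t(x_t)^\top (v_t-x_t) + \tfrac{(\sigma_t L_f+L_g)\alpha_t^2}{2}\|v_t-x_t\|^2.
\]
For the closed-loop rule \eqref{eq:inexactlinesearch}, $\alpha_t$ is by definition the minimizer of the right-hand side over $[0,1]$, so substituting any competitor $\alpha \in [0,1]$ on the right gives the claimed bound. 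For the line-search rule \eqref{eq:exactlinesearch}, I would use $\Phi_t(x_{t+1}) \leq \Phi_t(x_t + \alpha(v_t-x_t))$ for any $\alpha \in [0,1]$ and then apply the descent lemma to the right-hand side; this yields the same inequality. Subtracting $\Phi_t(x_{\opt})$ from both sides gives \eqref{ineq1:lemma:recursive-rule}.

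For \eqref{ineq2:lemma:recursive-rule}, the strategy is to specialize to $\alpha = \bar\alpha_t = 2/(t+2)$. For the closed-loop and line-search rules, \eqref{ineq1:lemma:recursive-rule} applied with this $\alpha$ gives the needed starting inequality; for the open-loop rule \eqref{eq:openloop}, $\alpha_t = \bar\alpha_t$ and the smoothness inequality above directly produces the same starting point. Then I invoke the two standard conditional-gradient ingredients: (i) optimality of $v_t$ implies $\nabla\Phi_t(x_t)^\top(v_t-x_t) \leq \nabla\Phi_t(x_t)^\top(x_{\opt}-x_t)$, and (ii) convexity of $\Phi_t$ gives $\nabla\Phi_t(x_t)^\top(x_{\opt}-x_t) \leq \Phi_t(x_{\opt})-\Phi_t(x_t)$. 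Combining these with $\|v_t-x_t\| \leq D$ from \cref{ass:X-compact} and simplifying the coefficient $\bar\alpha_t^2/2 = 2/(t+2)^2$ yields \eqref{ineq2:lemma:recursive-rule}.

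There is no real obstacle here; the only subtlety is making sure the three step-size rules are unified properly. The closed-loop and line-search rules each separately imply the competitor-inequality \eqref{ineq1:lemma:recursive-rule} (with slightly different reasoning, as above), while the open-loop rule does not fit the format of \eqref{ineq1:lemma:recursive-rule} but instead produces \eqref{ineq2:lemma:recursive-rule} directly through smoothness with $\alpha_t = \bar\alpha_t$. I would therefore present the proof of \eqref{ineq2:lemma:recursive-rule} as two short cases: one deriving it from \eqref{ineq1:lemma:recursive-rule} for the two adaptive rules, and one deriving it from direct smoothness for the open-loop rule.
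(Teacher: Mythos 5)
Your proposal matches the paper's proof essentially step for step: the descent lemma for the $(\sigma_t L_f + L_g)$-smooth function $\Phi_t$, the two slightly different orderings of the argument for the closed-loop and line-search rules to get \eqref{ineq1:lemma:recursive-rule}, the specialization to $\alpha = \bar\alpha_t$ (with the open-loop case handled directly), and finally the optimality of $v_t$ plus convexity of $\Phi_t$ together with $\|v_t - x_t\| \leq D$. The proof is correct and there is nothing to add.
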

\begin{proof}[Proof of \cref{lemma:recursive-rule}]
Under the step size selection scheme %
\eqref{eq:exactlinesearch}, we have
\begin{align*}
    \Phi_t(x_{t+1})
    &\leq \Phi_t\left(x_t+\alpha(v_t-x_t)\right)\\
    &\leq \Phi_t(x_t)+\alpha \nabla \Phi_t(x_t)^\top (v_t-x_t)+\frac{(\sigma_tL_f+L_g)\alpha^2}{2}\|v_t-x_t\|^2
\end{align*}
where the first inequality follows from the choice of $\alpha_t$ and the second inequality holds because $\Phi_t$ is $(\sigma_tL_f + L_g)$-smooth.
Under the step size schedule %
\eqref{eq:inexactlinesearch}, we have
\begin{align*}
    \Phi_t(x_{t+1})
    &\leq \Phi_t(x_t)+\alpha_t \nabla \Phi_t(x_t)^\top (v_t-x_t)+\frac{(\sigma_tL_f+L_g)\alpha_t^2}{2}\|v_t-x_t\|^2\\
    &\leq \Phi_t(x_t)+\alpha \nabla \Phi_t(x_t)^\top (v_t-x_t)+\frac{(\sigma_tL_f+L_g)\alpha^2}{2}\|v_t-x_t\|^2
\end{align*}
where the first inequality holds because $\Phi_t$ is $(\sigma_tL_f + L_g)$-smooth and the second inequality is due to the choice of $\alpha_t$. Hence, we deduce that~\eqref{ineq1:lemma:recursive-rule} holds under~\eqref{eq:inexactlinesearch} and \eqref{eq:exactlinesearch}. In particular, it follows from~\eqref{ineq1:lemma:recursive-rule} with $\alpha = \bar{\alpha}_t$ that
$$\Phi_t(x_{t+1}) \leq  \Phi_t(x_t)+\bar{\alpha}_t \nabla \Phi_t(x_t)^\top (v_t-x_t)+\frac{(\sigma_tL_f+L_g)\bar{\alpha}_t^2}{2}\|v_t-x_t\|^2$$
under \cref{eq:exactlinesearch} or \cref{eq:inexactlinesearch}. In fact, this inequality also holds under~\eqref{eq:openloop} since $\alpha_t=\bar{\alpha}_t$. Moreover, since $f$ and $g$ are convex, $\Phi_t$ is convex. Thus, through the definition of $v_t$ and convexity of $\Phi_t$ we have that
$$\nabla \Phi_t(x_t)^\top (v_t-x_t) \leq \nabla \Phi_t(x_t)^\top (x_{\opt}-x_t) \leq \Phi_t(x_{\opt})-\Phi_t(x_t), \quad \forall t \geq 0.$$
Combining the above observations with $\|v_t-x_t\| \leq D$, we obtain \eqref{ineq2:lemma:recursive-rule}, as required.
\end{proof}

\begin{corollary}\label{lemma: common bound}
Let $\{x_t\}_{t \geq 0}$ denote the iterates generated by \cref{alg:IR-CG} with step sizes $\{\alpha_t\}_{t \geq 0}$ chosen via any one of \cref{eq:openloop,eq:exactlinesearch,eq:inexactlinesearch}. If \cref{ass:smooth-functions} and \cref{IRCG-C1} hold, then for each $t \geq 1$, it holds that
\begin{equation} \label{eq:weighted-sum}
\begin{aligned}
&(t+1)t\left(\Phi_t(x_t)-\Phi_t(x_{\opt})\right)+ \sum_{i \in [t]}(i+1)i(\sigma_{i-1}-\sigma_i)(f(x_i)-f_{\opt})\\ 
&\leq 2(L_f \sigma_0+L_g)D^2t.
\end{aligned}
\end{equation}
Consequently, we have for any $t \geq 1$,
\begin{align}
&(t+1)t\sigma_t(f(x_t)-f_{\opt})+\sum_{i \in [t]}(i+1)i(\sigma_{i-1}-\sigma_i)(f(x_i)-f_{\opt})\notag\\
&\leq 2(L_f \sigma_0+L_g)D^2t
\label{eq:weighted-sum-f},\\
&f(z_t)-f_{\opt} \leq \frac{2(L_f \sigma_0+L_g)D^2t}{S_t}.\label{eq:bound-fz}
\end{align}
\end{corollary}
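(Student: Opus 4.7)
The plan is to iterate the one-step descent inequality~\eqref{ineq2:lemma:recursive-rule} from \cref{lemma:recursive-rule} and then telescope after a ``change of regularizer.'' First I would multiply~\eqref{ineq2:lemma:recursive-rule} through by $(t+2)(t+1)$ and use $(t+1)/(t+2)\leq 1$ together with $\sigma_t\leq \sigma_0$ (\cref{IRCG-C1}) to obtain the scaled descent bound
$$(t+2)(t+1)\bigl(\Phi_t(x_{t+1})-\Phi_t(x_{\opt})\bigr)\leq (t+1)t\bigl(\Phi_t(x_t)-\Phi_t(x_{\opt})\bigr)+2(L_f\sigma_0+L_g)D^2.$$

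The key observation is the elementary identity
$$\Phi_t(x_{t+1})-\Phi_t(x_{\opt}) = \Phi_{t+1}(x_{t+1})-\Phi_{t+1}(x_{\opt})+(\sigma_t-\sigma_{t+1})\bigl(f(x_{t+1})-f_{\opt}\bigr),$$
which follows from $\Phi_s(x)-\Phi_s(x_{\opt}) = \sigma_s(f(x)-f_{\opt})+(g(x)-g_{\opt})$ applied at $s=t$ and $s=t+1$ with $x=x_{t+1}$. Substituting this into the scaled bound and setting $A_s:=(s+1)s\bigl(\Phi_s(x_s)-\Phi_s(x_{\opt})\bigr)$ produces the telescopable recursion
$$A_{t+1}+(t+2)(t+1)(\sigma_t-\sigma_{t+1})\bigl(f(x_{t+1})-f_{\opt}\bigr)\leq A_t+2(L_f\sigma_0+L_g)D^2.$$
Summing from $t=0$ to $t=T-1$, using $A_0=0$, and reindexing via $i=t+1$ recovers~\eqref{eq:weighted-sum}.

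For~\eqref{eq:weighted-sum-f}, I would note that $x_t\in X$ implies $g(x_t)\geq g_{\opt}$, so $\Phi_t(x_t)-\Phi_t(x_{\opt})\geq \sigma_t(f(x_t)-f_{\opt})$; using this lower bound for $A_t$ in~\eqref{eq:weighted-sum} yields~\eqref{eq:weighted-sum-f}. For~\eqref{eq:bound-fz}, the defining formula~\eqref{z} writes $z_t$ as a convex combination of $\{x_i\}_{i=1}^{t}$ with nonnegative weights (thanks to \cref{IRCG-C1}) summing to $S_t$. Applying Jensen's inequality to the convex $f$ and then subtracting $f_{\opt}$ bounds $f(z_t)-f_{\opt}$ by the left-hand side of~\eqref{eq:weighted-sum-f} divided by $S_t$, and~\eqref{eq:bound-fz} follows immediately.

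The main obstacle is identifying the algebraic swap that converts the ``frozen regularizer'' descent bound (in which $\Phi_t$ appears at both $x_t$ and $x_{t+1}$) into a form that telescopes cleanly across the varying $\sigma_t$. The extra term $(\sigma_t-\sigma_{t+1})(f(x_{t+1})-f_{\opt})$ produced by this swap is precisely what accumulates into the weighted sum of inner-level gaps appearing in~\eqref{eq:weighted-sum} and motivates the averaging weights used to define $z_t$. Once this identity is in hand, the quadratic weights $(t+1)t$ are exactly those that make the factor $1-2/(t+2)$ collapse the indexing, and the remaining steps are bookkeeping plus convexity.
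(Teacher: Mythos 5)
Your proposal is correct and follows essentially the same route as the paper: the same identity swapping $\Phi_t(x_{t+1})-\Phi_t(x_{\opt})$ for $\Phi_{t+1}(x_{t+1})-\Phi_{t+1}(x_{\opt})+(\sigma_t-\sigma_{t+1})(f(x_{t+1})-f_{\opt})$, the same quadratic weights $(t+1)t$ that make the factor $1-2/(t+2)$ telescope, and the same use of $g(x_t)\geq g_{\opt}$ and Jensen's inequality for the two consequences. The only difference is cosmetic ordering (you scale before applying the identity, the paper folds the $f$-term into an error sequence $\eta_t$ before scaling).
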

\begin{proof}[Proof of \cref{lemma: common bound}]
Note that
\begin{align*}
   & \Phi_t(x_{t+1})-\Phi_t(x_{\opt})\\
   &=\sigma_{t+1}(f(x_{t+1})-f_{\opt}) +g(x_{t+1})-g_{\opt} +(\sigma_{t}-\sigma_{t+1})(f(x_{t+1})-f_{\opt})\\
    &= \Phi_{t+1}(x_{t+1})-\Phi_{t+1}(x_{\opt})+(\sigma_{t}-\sigma_{t+1})(f(x_{t+1})-f_{\opt}).
\end{align*}
Then, if we select $\alpha_t$ based on \cref{eq:openloop,eq:exactlinesearch,eq:inexactlinesearch}, it follows from \cref{ineq2:lemma:recursive-rule} in \cref{lemma:recursive-rule} that
\begin{equation} \label{eq:recursive-rule}
    h_{t+1} \leq \left(1-\frac{2}{t+2}\right)h_t + \eta_t, \quad \forall
    t \geq 0,
\end{equation}
where for each $t \geq 0$, we define the notation
\begin{align*}
    h_t := \Phi_t(x_t)-\Phi_t(x_{\opt}), \quad\eta_t := \frac{2(L_f \sigma_t+L_g)D^2}{(t+2)^2} -(\sigma_t-\sigma_{t+1})(f(x_{t+1})-f_{\opt}).
\end{align*}
From \eqref{eq:recursive-rule}, for any $t \geq 1$, we obtain
$$(t+1)t h_t \leq (t+1)t\left(\frac{t-1}{t+1} h_{t-1} + \eta_{t-1}\right) = t(t-1)h_t+(t+1)t\eta_{t-1}.$$
Thus, for $t \geq 1$, it holds that
\begin{align*}
    (t+1)t h_t %
    =\sum_{i \in [t]} \left((i+1)i h_i-i(i-1) h_{i-1}\right)\leq \sum_{i \in [t]} (i+1)i\eta_{i-1}.
\end{align*}
By the fact that $i/(i+1)<1$ for $i \in [t]$, we have
\begin{align*}
    &(t+1)t\left(\Phi_t(x_t)-\Phi_t(x_{\opt})\right)\\
    &\leq \sum_{i\in [t]}\left(2(\sigma_{i-1} L_f+L_g)D^2 - (i+1)i(\sigma_{i-1}-\sigma_i)(f(x_i)-f_{\opt})\right).
\end{align*}
Since $\sigma_{i-1}\leq\sigma_0$ for $ i \in [t]$ by \cref{IRCG-C1}, we obtain $$ \sum_{i\in [t]}2(\sigma_{i-1} L_f+L_g)D^2 \leq \sum_{i\in [t]}2(\sigma_0 L_f+L_g)D^2 = 2(\sigma_0 L_f+L_g)D^2t,$$
which implies \eqref{eq:weighted-sum}. Since $g(x_T) \geq g_{\opt}$ and $\Phi_t(x)=\sigma_t f(x) + g(x)$, \cref{eq:weighted-sum-f} follows from \cref{lemma: common bound}.
Then \cref{eq:bound-fz} follows from convexity of $f$, the definition of $z_t$, and \eqref{eq:weighted-sum-f}.
\end{proof}

To establish the desired convergence results for \cref{alg:IR-CG}, we need to impose further conditions on the regularization parameters $\{\sigma_t\}_{t \geq 0}$ which are stated below.

\begin{condition} \label{IRCG-C2}
    There exists $t_0 \in \mathbb{N}$ such that if $t \geq t_0$, we have $(t+2) \sigma_{t+1} > (t+1)\sigma_t$, and $(t+1)\sigma_t \to \infty$ as $t \to \infty$.  
\end{condition}

\begin{condition} \label{IRCG-C3}
There exists $L \in \mathbb{R}$ such that
$$\lim_{t \to \infty} t\left(\frac{\sigma_t}{\sigma_{t+1}}-1\right)=L.$$
\end{condition}

The following parameters will appear in our analysis and convergence rates.
\begin{subequations}\label{eq:CV}
\begin{align}
C&:=\left(f_{\opt}-\min_{x \in X} f(x)\right)\left(1+\sup_{t \geq 0} \frac{\sum_{i \in [t]}(i+1)i(\sigma_{i-1}-\sigma_i)}{(t+1)t\sigma_t}\right)+\frac{2(\sigma_0 L_f+L_g)D^2}{\min_{t \geq 0} (t+1)\sigma_t}\label{C}\\
V&:= \sup_{t \geq 0} \frac{\sum_{i \in [t]}(i+1)i(\sigma_{i-1}-\sigma_i)\sigma_i}{(t+1)t\sigma_t^2}.\label{V}
\end{align}
\end{subequations}
It is not obvious that $C,V$ as defined are finite. The following lemma states that any choice of parameters satisfying \crefrange{IRCG-C1}{IRCG-C3} will give finite $C$ and $V$.

\begin{lemma} \label{lemma:CV}
If \cref{ass:smooth-functions} and \crefrange{IRCG-C1}{IRCG-C3} hold, then $C$ defined in \eqref{C} is finite. Furthermore, if $L$ defined in \cref{IRCG-C3} is less than $1$, then $V$ defined in \eqref{V} is finite.
\end{lemma}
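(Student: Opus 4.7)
The plan is to verify that every term in the definitions~\eqref{C} and~\eqref{V} is finite. The only nontrivial steps are to bound the two suprema, both of which are ratios of partial sums involving $(i+1)i(\sigma_{i-1}-\sigma_i)$. My strategy is first to rewrite the numerators via Abel summation (summation by parts) to expose their asymptotic scale, and then to apply the Stolz-Cesaro theorem to conclude convergence of the ratio, from which boundedness of the sequence -- and therefore finiteness of the sup -- follows. The two key sequences used for Stolz-Cesaro will be shown to be eventually strictly monotone and divergent by directly invoking \cref{IRCG-C2} and \cref{IRCG-C3}.

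For $C$, the factor $f_{\opt}-\min_{x \in X}f(x)$ is finite by compactness of $X$ (\cref{ass:X-compact}) and continuity of $f$, while $\min_{t \geq 0}(t+1)\sigma_t$ is strictly positive because \cref{IRCG-C1} gives $(t+1)\sigma_t>0$ for every $t$ and \cref{IRCG-C2} makes $\{(t+1)\sigma_t\}$ eventually strictly increasing, leaving only finitely many candidates for the minimum. For the supremum, Abel summation yields the identity
\begin{equation*}
\sum_{i \in [t]}(i+1)i(\sigma_{i-1}-\sigma_i) = 2\sum_{i=0}^{t-1}(i+1)\sigma_i - (t+1)t\sigma_t,
\end{equation*}
so it suffices to bound the ratio $\sum_{i=0}^{t-1}(i+1)\sigma_i / ((t+1)t\sigma_t)$. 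Under \cref{IRCG-C2} the denominator is eventually strictly increasing and divergent, so Stolz-Cesaro applies and the difference-quotient simplifies to $\sigma_t/((t+2)\sigma_{t+1}-t\sigma_t)$, whose limit via \cref{IRCG-C3} is $1/(2-L)$. This is finite because \cref{IRCG-C2} and \cref{IRCG-C3} together force $L \leq 1$: the inequality $(t+2)\sigma_{t+1} > (t+1)\sigma_t$ rearranges to $t(\sigma_t/\sigma_{t+1}-1) < t/(t+1)$, and passing to the limit using \cref{IRCG-C3} gives $L \leq 1 < 2$.

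For $V$, I would use the same template with $a_t := \sum_{i \in [t]}(i+1)i(\sigma_{i-1}-\sigma_i)\sigma_i$ and $b_t := (t+1)t\sigma_t^2$. Here the stronger hypothesis $L<1$ enters twice: the monotonicity $b_{t+1} > b_t$ is equivalent to $(\sigma_t/\sigma_{t+1})^2 < (t+2)/t$, whose asymptotic form via \cref{IRCG-C3} requires $L<1$; and the estimate $\sigma_t \sim c\,t^{-L}$, which is a standard consequence of \cref{IRCG-C3}, yields $b_t \sim t^{2-2L}\to\infty$ exactly when $L<1$. Expanding $\sigma_{t+1}/\sigma_t = 1-L/t+o(1/t)$ in the Stolz-Cesaro difference quotient $(a_{t+1}-a_t)/(b_{t+1}-b_t)$ then shows it converges to $L/(2(1-L))$, a finite number. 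The main technical obstacle will be executing these Stolz-Cesaro computations carefully -- particularly for $V$, where both numerator and denominator of the difference quotient scale like $\sigma_t^2$ times lower-order corrections that only reveal the leading behaviour after a consistent first-order expansion of $\sigma_{t+1}/\sigma_t$. A secondary but crucial subtlety, essential to the $C$ argument, is extracting the bound $L \leq 1$ from \cref{IRCG-C2} and \cref{IRCG-C3} in order to preclude the pole of $1/(2-L)$ at $L = 2$.
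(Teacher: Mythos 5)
Your proposal follows essentially the same route as the paper: both bound the two suprema by showing the corresponding ratio sequences converge via the Stolz--Ces\`aro theorem, arriving at the same limits $L/(2-L)$ and $L/(2(1-L))$, and both extract $L \leq 1$ from \cref{IRCG-C2} and \cref{IRCG-C3} (you do it directly, the paper by contradiction). Your Abel-summation preprocessing of the numerator in the $C$ ratio is a harmless variation---the paper applies Stolz--Ces\`aro to the original ratio directly---and your difference-quotient computations check out.

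One auxiliary claim is false, though it is invoked only for a fact that holds for simpler reasons: \cref{IRCG-C3} does \emph{not} imply $\sigma_t \sim c\,t^{-L}$. It only gives $\sigma_{t+1}/\sigma_t = 1 - L/t + o(1/t)$, and the $o(1/t)$ errors can accumulate to a divergent series; for instance $\sigma_t = (t+1)^{-p}/\log(t+2)$ satisfies \crefrange{IRCG-C1}{IRCG-C3} with $L=p$ but is not asymptotic to any constant multiple of $t^{-p}$. Fortunately you use this only to conclude $(t+1)t\sigma_t^2 \to \infty$, which follows immediately from \cref{IRCG-C2}: $(t+1)t\sigma_t^2 = \bigl[(t+1)\sigma_t\bigr]\cdot\bigl[t\sigma_t\bigr]$ and both factors tend to $+\infty$, since $t\sigma_t = \tfrac{t}{t+1}(t+1)\sigma_t$. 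This is how the paper argues, and writing $t\sigma_t = (t+1)\sigma_t - \sigma_t$ as a sum of eventually increasing sequences also gives eventual strict monotonicity of the denominator without any asymptotic expansion; your comparison $(\sigma_t/\sigma_{t+1})^2 < (t+2)/t$ is nonetheless a valid alternative under the standing hypothesis $L<1$.
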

\begin{proof}[Proof of \cref{lemma:CV}]
We note that under Conditions \ref{IRCG-C1} and \ref{IRCG-C2}, $0 \leq L \leq 1$. While it is trivial to see $L\geq 0$, $L\leq1$ needs some justifications. For the sake of contradiction, if $L>1$, then for some sufficiently large $t$, we have
$$t\left(\frac{\sigma_t}{\sigma_{t+1}}-1\right) > 1 \iff \frac{\sigma_t}{\sigma_{t+1}} > \frac{t+1}{t} \iff (t+1)\sigma_{t+1} < t\sigma_t,$$
 which contradicts \cref{IRCG-C2}.
We observe that
$$
\lim_{t \to \infty} \frac{(t+1)t(\sigma_{t-1}-\sigma_t)}{(t+1)t\sigma_t-t(t-1)\sigma_{t-1}}
		= \lim_{t \to \infty} \frac{(1+\frac{2}{t-1})(t-1)(\frac{\sigma_{t-1}}{\sigma_t}-1)}{2-(t-1)(\frac{\sigma_{t-1}}{\sigma_t}-1)}= \frac{L}{2-L}
$$
where the first equality is obtained by diving both the numerator and the denominator by $t\sigma_t$ and the second equality comes from~\cref{IRCG-C3}. Moreover,
\begin{align*}
&\lim_{t \to \infty} \frac{(t+1)t(\sigma_{t-1}-\sigma_t)}{(t+1)t\sigma_t-t(t-1)\sigma_{t-1}}\\
&=\lim_{t \to \infty} \frac{\sum_{i\in[t]}(i+1)i(\sigma_{i-1}-\sigma_i)-\sum_{i\in[t-1]}(i+1)i(\sigma_{i-1}-\sigma_i)}{(t+1)t\sigma_t-t(t-1)\sigma_{t-1}}\\
&=\lim_{t \to \infty} \frac{\sum_{i \in [t]}(i+1)i(\sigma_{i-1}-\sigma_i)}{(t+1)t\sigma_t}
\end{align*}
where the second equality follows from  the Stolz–Ces\`{a}ro theorem. (This states that for two sequences of real numbers $\{u_n\}_{n\geq 1}$ and $\{v_n\}_{n\geq 1}$ where $\{v_n\}_{n \geq 1}$ is strictly monotone and divergent, if $\lim_{n \to \infty} (u_{n+1}-u_n)/(v_{n+1}-v_n) = l$ for $l \in \mathbb{R}\cup \{\pm \infty\}$, then $\lim_{n \to \infty} u_n/v_n = l$.)
This implies that
\begin{equation} \label{eq:L/(2-L)}
		\lim_{t \to \infty} \frac{\sum_{i \in [t]}(i+1)i(\sigma_{i-1}-\sigma_i)}{(t+1)t\sigma_t} = \frac{L}{2-L},
\end{equation}
and therefore
$$\sup_{t \geq 0} \frac{\sum_{i \in [t]}(i+1)i(\sigma_{i-1}-\sigma_i)}{(t+1)t\sigma_t} \in (0, \infty).$$
Using \cref{IRCG-C2}, we have $\min_{t \geq 0} (t+1)\sigma_t > 0$. Thus, $C$ is finite.

From
Conditions \ref{IRCG-C1} and \ref{IRCG-C2}, $\{t\sigma_t\}_{t \geq 0}$ is eventually increasing since $t\sigma_t = (t+1)\sigma_t-\sigma_t$. This also implies that $\{(t+1)t\sigma_t^2\}_{t \geq 0}$ is eventually increasing and diverges to $\infty$ because $(t+1)t\sigma_t^2=(t+1)\sigma_t\cdot t\sigma_t$. Note that
\begin{align*}
    &\lim_{t \to \infty} \frac{\sum_{i \in [t]}(i+1)i(\sigma_{i-1}-\sigma_i)\sigma_i}{(t+1)t\sigma_t^2}\\
    &=\lim_{t \to \infty} \frac{(t+2)(t+1)(\sigma_t-\sigma_{t+1})\sigma_{t+1}}{(t+2)(t+1)\sigma_{t+1}^2-(t+1)t\sigma_t^2}\\
    & = \lim_{t \to \infty} \frac{\left(\frac{t+2}{t}\right)t\left(\frac{\sigma_t}{\sigma_{t+1}}-1\right)}{\left(2-t\left(\frac{\sigma_t}{\sigma_{t+1}}-1\right)\right)\left(\frac{\sigma_t}{\sigma_{t+1}}+1\right)-2\frac{\sigma_t}{\sigma_{t+1}}} = \frac{L}{(2-L)2-2} = \frac{L}{2(1-L)}
\end{align*}
where the first equality is implied by the Stolz-Ces\`aro theorem, the second equality is obtained from dividing both the denominator and the numerator by $(t+1)\sigma_{t+1}^2$, and the third equality comes from~\cref{IRCG-C3} and 
$$\lim_{t\to\infty}\frac{\sigma_t}{\sigma_{t+1}} = 1+ \lim_{t\to\infty}\frac{1}{t}\cdot t\left(\frac{\sigma_t}{\sigma_{t+1}}-1\right)=1.$$
Note that $L/2(1-L)\in[0,\infty)$ since $L \in [0,1)$. Thus, $V$ is finite.
\end{proof}
We now provide an $o(1)$ upper bound on $g(x_t)-g_{\opt}$, which is then used in \cref{lemma:twoupperbounds} to bound the gap $g(z_t) - g_{\opt}$.
\begin{lemma} \label{lemma:lastiterate}
Let $\{x_t\}_{t \geq 0}$ denote the iterates generated by \cref{alg:IR-CG} with step sizes $\{\alpha_t\}_{t \geq 0}$ chosen via any one of \cref{eq:openloop,eq:exactlinesearch,eq:inexactlinesearch}, and let $C$ be defined as in~\eqref{C}. If \cref{ass:smooth-functions} and \crefrange{IRCG-C1}{IRCG-C3} hold, then for each $t \geq 1$, it holds that
\begin{equation} \label{eq:bound-inner}
\begin{aligned}
    g(x_t)-g_{\opt} \leq C\sigma_t.
\end{aligned}
\end{equation}
\end{lemma}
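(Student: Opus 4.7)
The plan is to start from the weighted-sum bound \eqref{eq:weighted-sum} established in \cref{lemma: common bound} and peel off the desired $g(x_t)-g_{\opt}$ term. First I would expand the key quantity using the fact that any $x_{\opt}\in X_{\opt}$ satisfies $f(x_{\opt})=f_{\opt}$ and $g(x_{\opt})=g_{\opt}$:
\begin{equation*}
\Phi_t(x_t)-\Phi_t(x_{\opt}) = \sigma_t(f(x_t)-f_{\opt})+(g(x_t)-g_{\opt}).
\end{equation*}
Substituting into \eqref{eq:weighted-sum} isolates $(t+1)t(g(x_t)-g_{\opt})$ on the left, with two $f$-difference terms contributing on the same side.

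The key observation is that although $f(x_i)-f_{\opt}$ need not be nonnegative (since $x_i$ need not lie in $X_{\opt}$), it is bounded below by $-(f_{\opt}-\min_{x\in X}f(x))$, a finite constant by \cref{ass:X-compact} and continuity of $f$. Thus I would move both $\sigma_t(f(x_t)-f_{\opt})$ and $\sum_{i\in[t]}(i+1)i(\sigma_{i-1}-\sigma_i)(f(x_i)-f_{\opt})$ to the right-hand side, using these lower bounds to produce
\begin{equation*}
(t+1)t(g(x_t)-g_{\opt}) \leq 2(L_f\sigma_0+L_g)D^2 t + \left[(t+1)t\sigma_t + \sum_{i\in[t]}(i+1)i(\sigma_{i-1}-\sigma_i)\right](f_{\opt}-\min_{x\in X}f(x)),
\end{equation*}
where nonnegativity of the coefficients $(\sigma_{i-1}-\sigma_i)$ (from \cref{IRCG-C1}) justifies the sign direction in the summation.

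Next I would divide by $(t+1)t$ and, in each resulting term, extract a factor of $\sigma_t$. Specifically, the first term becomes $\tfrac{2(L_f\sigma_0+L_g)D^2}{t+1}$, which I bound by $\tfrac{2(L_f\sigma_0+L_g)D^2}{\min_{s\geq 0}(s+1)\sigma_s}\,\sigma_t$; the remaining terms give $\sigma_t(f_{\opt}-\min_X f)$ plus $\sigma_t(f_{\opt}-\min_X f)\cdot\tfrac{\sum_{i\in[t]}(i+1)i(\sigma_{i-1}-\sigma_i)}{(t+1)t\sigma_t}$, which is dominated by the supremum appearing in \eqref{C}. Collecting factors of $\sigma_t$ reproduces exactly the definition of $C$ in \eqref{C}, yielding $g(x_t)-g_{\opt}\leq C\sigma_t$.

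The only subtlety is ensuring $C<\infty$, but this is precisely \cref{lemma:CV}, which has already been established; in particular, $\min_{s\geq 0}(s+1)\sigma_s>0$ follows from \cref{IRCG-C2}, and the supremum in \eqref{C} is finite by \eqref{eq:L/(2-L)}. So the main ``obstacle'' is really bookkeeping: carefully tracking signs when the convex combinations $f(x_i)-f_{\opt}$ can be negative, and recognizing that the particular linear combination of $\sigma_t$-multiples appearing on the right is exactly the constant $C$ the authors defined in \eqref{C}.
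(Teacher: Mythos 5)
Your proposal is correct and follows essentially the same route as the paper's proof: both start from \eqref{eq:weighted-sum}, expand $\Phi_t(x_t)-\Phi_t(x_{\opt})=\sigma_t(f(x_t)-f_{\opt})+(g(x_t)-g_{\opt})$, lower-bound each $f(x_i)-f_{\opt}$ by $-(f_{\opt}-\min_{x\in X}f(x))$ using the nonnegativity of the weights $(i+1)i(\sigma_{i-1}-\sigma_i)$ from \cref{IRCG-C1}, and then divide by $(t+1)t\sigma_t$ to recognize the constant $C$ from \eqref{C}. The bookkeeping you describe, including the bound $\tfrac{1}{t+1}\leq \tfrac{\sigma_t}{\min_{s\geq 0}(s+1)\sigma_s}$, matches the paper exactly.
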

\begin{proof}[Proof of \cref{lemma:lastiterate}]
First, we define $F:= f_{\opt}-\min_{x \in X} f(x) \geq 0$. 
Then we have $F\geq f_{\opt}-f(x_t)$ for $t \geq 0$. This, together with \cref{IRCG-C1} and \eqref{eq:weighted-sum} from \cref{lemma: common bound}, implies that
\begin{align*}
    \frac{g(x_t)-g_{\opt}}{\sigma_t}
    &\leq \frac{1}{(t+1)t\sigma_t} \left( 2(\sigma_0 L_f+L_g)D^2t+F\sum_{i \in [t]} (i+1)i(\sigma_{i-1}-\sigma_i)\right)+F\\
    &= \frac{2(\sigma_0 L_f+L_g)D^2}{(t+1)\sigma_t}
+F\left(1+\frac{\sum_{i \in [t]} (i+1)i(\sigma_{i-1}-\sigma_i)}{(t+1)t\sigma_t}\right).
\end{align*}
Here, the right-hand side is at most $C$, and therefore, $g(x_t)-g_{\opt}\leq C\sigma_t$.
\end{proof}

We now show asymptotic convergence of $\{x_t\}_{t\geq 0}$ for the outer-level objective $f$. %
\begin{theorem}\label{theorem:asymptotic}
Under the same conditions as \cref{lemma:lastiterate}, we have
\[\lim_{t \to \infty} f(x_t) = f_{\opt}, \quad \lim_{t \to \infty} g(x_t) = g_{\opt}.\]
\end{theorem}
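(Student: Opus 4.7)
The plan is to handle the two limits separately. The inner-level limit $g(x_t)\to g_{\opt}$ is essentially free: \cref{lemma:lastiterate} gives $0 \leq g(x_t)-g_{\opt} \leq C\sigma_t$ (using that $g_{\opt}$ is the minimum of $g$ over $X$), and \cref{IRCG-C1} ensures $\sigma_t\to 0$, so the squeeze theorem delivers the claim.

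The outer-level limit $f(x_t)\to f_{\opt}$ requires more care, and I would split it into a $\liminf$ and a $\limsup$ bound. For the $\liminf$, the key observation is that since $g$ is continuous and $g(x_t)\to g_{\opt}$, every accumulation point $\bar{x}$ of $\{x_t\}$ (which exists by compactness of $X$, \cref{ass:X-compact}) satisfies $g(\bar{x})=g_{\opt}$, i.e., $\bar{x}\in X_{\opt}$. Hence $f(\bar{x})\geq f_{\opt}$, and continuity of $f$ together with the standard subsequence argument gives $\liminf_{t\to\infty} f(x_t)\geq f_{\opt}$.

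The main obstacle is the $\limsup$ bound, where I would exploit the weighted bound \eqref{eq:weighted-sum-f}. Writing $a_t := f(x_t) - f_{\opt}$ and dividing by $(t+1)t\sigma_t$ gives
\begin{equation*}
a_t \;\leq\; \frac{2(L_f\sigma_0+L_g)D^2}{(t+1)\sigma_t} \;-\; \frac{1}{(t+1)t\sigma_t}\sum_{i\in[t]}(i+1)i(\sigma_{i-1}-\sigma_i)\,a_i.
\end{equation*}
The first term vanishes by \cref{IRCG-C2} since $(t+1)\sigma_t\to\infty$. For the second term, fix $\epsilon>0$ and use the $\liminf$ bound to pick $T$ with $a_i \geq -\epsilon$ for all $i\geq T$; split the sum at $T$ and bound the head by using $a_i \geq -F$ where $F := f_{\opt}-\min_{x\in X} f(x)$, so the head becomes a fixed constant that disappears when divided by $(t+1)t\sigma_t\to\infty$. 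The tail contributes at most $\epsilon \cdot \sum_{i\in[t]}(i+1)i(\sigma_{i-1}-\sigma_i)/((t+1)t\sigma_t)$, which by \eqref{eq:L/(2-L)} (already established in the proof of \cref{lemma:CV}) tends to $\epsilon L/(2-L)$, a finite constant multiple of $\epsilon$.

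Putting these together yields $\limsup_{t\to\infty} a_t \leq \epsilon L/(2-L)$, and since $\epsilon>0$ was arbitrary and $L/(2-L)\in[0,\infty)$, we conclude $\limsup_{t\to\infty} f(x_t)\leq f_{\opt}$. Combined with the $\liminf$ bound, this proves $f(x_t)\to f_{\opt}$. The delicate point is that the recursive bound \eqref{eq:weighted-sum-f} alone does not control $a_t$ from above because $a_i$ can be negative; the $\liminf$ result is what rescues us by making the weighted sum eventually almost nonnegative.
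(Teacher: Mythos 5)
Your proposal is correct and follows essentially the same route as the paper: the inner-level limit via \cref{lemma:lastiterate} and $\sigma_t \to 0$, the $\liminf$ bound via compactness and accumulation points lying in $X_{\opt}$, and the $\limsup$ bound by splitting the weighted sum in \eqref{eq:weighted-sum-f} at the threshold $t_\epsilon$ and invoking the limit \eqref{eq:L/(2-L)}. The only cosmetic difference is that you bound the head of the sum explicitly using $F := f_{\opt} - \min_{x\in X} f(x)$, whereas the paper simply treats it as a fixed constant annihilated by dividing by $(t+1)t\sigma_t \to \infty$; both are valid.
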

\begin{proof}[Proof of \cref{theorem:asymptotic}]
From \cref{lemma:lastiterate}, we have $\lim_{t \to \infty} g(x_t)=g_{\opt}$, which implies that any limit point of $\{x_t\}_{t \geq 0}$ should be in $X_{\opt}$. Furthermore, since $X$ is compact, there will never be a subsequence of $\{x_t\}_{t \geq 0}$ that diverges to infinity. Thus $\liminf_{t \to \infty} f(x_t) \geq f_{\opt}$, i.e.,
for any $\epsilon > 0$, there exists $t_\epsilon \geq 1$ such that
$f(x_t)-f_{\opt} \geq -\epsilon$ for any $t > t_\epsilon$.
For $t > t_\epsilon$, we decompose the sum in 
\eqref{eq:weighted-sum-f} into separate terms as follows:
\begin{align*}
    f(x_t)-f_{\opt}
    &\leq \frac{2(L_f \sigma_0+L_g)D^2 t + \sum_{i \in [t_\epsilon]}(i+1)i(\sigma_{i-1}-\sigma_i)(f_{\opt}-f(x_i))}{(t+1)t\sigma_t}\\
    &\quad + \frac{ \epsilon\left(\sum_{i \in [t]}(i+1)i(\sigma_{i-1}-\sigma_i)-\sum_{i \in [t_\epsilon]}(i+1)i(\sigma_{i-1}-\sigma_i)\right)}{(t+1)t\sigma_t}.
\end{align*}
Then it follows that %
\begin{align*}
\limsup_{t \to \infty} (f(x_t)-f_{\opt}) &\leq \epsilon\cdot \limsup_{t \to \infty} 	\frac{\sum_{i \in [t]}(i+1)i(\sigma_{i-1}-\sigma_i)}{(t+1)t\sigma_t}  = \frac{\epsilon L}{2-L}
\end{align*}
where the equality is from \eqref{eq:L/(2-L)}. Thus $\limsup_{t\to \infty} f(x_t) \leq f_{\opt}$
as $\epsilon$ can be chosen arbitrarily small, and hence $\lim_{t \to \infty} f(x_t) = f_{\opt}$.
\end{proof}

Turning attention to the averaged sequence $\{z_t\}_{t \geq 1}$, we provide concrete bounds on the inner- and outer-level optimality gaps.
\begin{lemma} \label{lemma:twoupperbounds}
Let $\{z_t\}_{t \geq 1}$ denote the iterates generated by \cref{alg:IR-CG} defined in \cref{z}.
Under the same conditions as \crefrange{lemma:CV}{lemma:lastiterate}, and with $C,V$ defined in \cref{eq:CV}, for any $t \geq 1$ we have
\begin{align*}
    f(z_t) - f_{\opt} &\leq \frac{2(L_f\sigma_0+L_g)D^2}{(t+1)\sigma_t},\\
    g(z_t)-g_{\opt} &\leq C(1+V) \sigma_t.
\end{align*}
\end{lemma}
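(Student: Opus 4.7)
The plan is to handle the two inequalities separately, with both hinging on a simple lower bound for the denominator $S_t$.

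For the outer-level bound on $f(z_t) - f_{\opt}$, I would start from inequality \eqref{eq:bound-fz} in \cref{lemma: common bound}, which already gives $f(z_t) - f_{\opt} \leq 2(L_f\sigma_0+L_g)D^2 t / S_t$. Thus the only work is to lower-bound $S_t$. Writing $S_t = (t+1)t\sigma_t + \sum_{i \in [t]}(i+1)i(\sigma_{i-1}-\sigma_i)$ and using \cref{IRCG-C1} (which makes every term in the sum nonnegative), I get $S_t \geq (t+1)t\sigma_t$. Plugging this in cancels a factor of $t$ and yields the first claim.

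For the inner-level bound on $g(z_t) - g_{\opt}$, the idea is to exploit that $z_t$ is, by construction, the convex combination of $x_t, x_{t-1}, \ldots, x_1$ with weights $(t+1)t\sigma_t$ and $(i+1)i(\sigma_{i-1}-\sigma_i)$ summing to $S_t$. Convexity of $g$ then gives
\begin{equation*}
g(z_t) - g_{\opt} \leq \frac{(t+1)t\sigma_t\,(g(x_t)-g_{\opt}) + \sum_{i \in [t]}(i+1)i(\sigma_{i-1}-\sigma_i)(g(x_i)-g_{\opt})}{S_t}.
\end{equation*}
Now I would apply the per-iterate bound $g(x_i) - g_{\opt} \leq C\sigma_i$ from \cref{lemma:lastiterate} to every term in the numerator. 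This produces $C(t+1)t\sigma_t^2$ from the leading term and $C\sum_{i \in [t]}(i+1)i(\sigma_{i-1}-\sigma_i)\sigma_i$ from the sum. The definition \eqref{V} of $V$ is tailored exactly to bound the latter sum by $CV(t+1)t\sigma_t^2$, so the numerator is at most $C(1+V)(t+1)t\sigma_t^2$. Combined with the lower bound $S_t \geq (t+1)t\sigma_t$ established above, this yields $g(z_t)-g_{\opt} \leq C(1+V)\sigma_t$.

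There is no real obstacle here: both halves are essentially bookkeeping once one recognizes that the constants $C$ and $V$ in \eqref{eq:CV} were defined precisely to make the weighted sums of $g(x_i)-g_{\opt}$ collapse into a multiple of $\sigma_t$. The only point requiring a moment of care is ensuring that the weights defining $z_t$ are nonnegative (so convexity of $g$ applies), which is immediate from \cref{IRCG-C1}.
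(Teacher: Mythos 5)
Your proposal is correct and follows essentially the same route as the paper's own proof: both parts use the lower bound $S_t \geq (t+1)t\sigma_t$, inequality \eqref{eq:bound-fz} for the outer level, and convexity of $g$ together with \cref{lemma:lastiterate} and the definition of $V$ for the inner level. The only (immaterial) difference is the order in which you invoke the bound on $V$ versus the lower bound on $S_t$.
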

\begin{proof} [Proof of \cref{lemma:twoupperbounds}]
Since $S_t \geq (t+1)t\sigma_t$ (see \cref{rem:recursion}) and \eqref{eq:bound-fz}, we have that
$$f(z_t)-f_{\opt} \leq \frac{2(L_f\sigma_0+L_g)D^2 t}{S_t}\leq \frac{2(L_f\sigma_0+L_g)D^2}{(t+1)\sigma_t}.$$
Using the upper bound $g(x_t)-g_{\opt} \leq C \sigma_t$ in \eqref{eq:bound-inner}, the definition of $z_t$ in \cref{z} and convexity of $g$, we have that
$$g(z_t)-g_{\opt} \leq \frac{C}{S_t}\left((t+1)t\sigma_t^2+\sum_{i \in [t]}(i+1)i(\sigma_{i-1}-\sigma_i)\sigma_i\right).$$
Since $S_t \geq (t+1)t\sigma_t$, we obtain
\[ g(z_t) - g_{\opt} \leq \frac{C}{(t+1)t\sigma_t} \left((t+1)t\sigma_t^2+ \sum_{i \in [t]}(i+1)i(\sigma_{i-1}-\sigma_i)\sigma_i\right). \]
The bound $g(z_t) - g_{\opt} \leq C(1+V) \sigma_t$ now follows from the definition of $V$.
\end{proof}

Next, we suggest parameter choices of $\{\sigma_t\}_{t \geq 0}$ satisfying all required conditions. Given a chosen meta-parameter $p \in (0,1)$, \cref{theorem: a convergence rate for PCG-BiO} then establishes convergence rates of $O(t^{-p})$ and $O(t^{-(1-p)})$ for the inner- and outer-level objectives, respectively.

\begin{lemma} \label{lemma: example of lambda}
    Given $p \in (0,1)$ and $\varsigma > 0$, the sequence $\sigma_t := \varsigma (t+1)^{-p}$ for $t \geq 0$ satisfies \crefrange{IRCG-C1}{IRCG-C3} with $L=p$. Furthermore, it holds that
    \[C \leq (1+2p)\left(f_{\opt}-\min_{x \in X} f(x)\right) + \frac{2(\varsigma L_f +L_g)D^2}{\varsigma}, \quad V \leq \frac{2p}{\min\{1,2(1-p)\}}\]
    where $C$ and $V$ are defined in \cref{eq:CV}.
\end{lemma}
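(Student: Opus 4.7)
The plan is to verify \cref{IRCG-C1}--\cref{IRCG-C3} directly for $\sigma_t = \varsigma(t+1)^{-p}$, and then derive the claimed bounds on $C$ and $V$ using elementary estimates centred on a single key inequality.

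For the conditions: \cref{IRCG-C1} is immediate since $p \in (0,1)$ makes $\sigma_t$ strictly decreasing, positive, and vanishing. For \cref{IRCG-C2}, observe that $(t+1)\sigma_t = \varsigma (t+1)^{1-p}$ is strictly increasing in $t$ because $1-p>0$; hence $(t+2)\sigma_{t+1}>(t+1)\sigma_t$ for every $t \geq 0$ (so $t_0=0$ works) and $(t+1)\sigma_t \to \infty$. For \cref{IRCG-C3}, compute $\sigma_t/\sigma_{t+1}=(1+1/(t+1))^p$ and apply $(1+y)^p = 1+py+O(y^2)$ as $y\to 0$ to conclude $t(\sigma_t/\sigma_{t+1}-1)\to p$, so $L=p$.

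The key estimate is
\[(i+1)i(\sigma_{i-1}-\sigma_i) \leq p\varsigma(i+1)^{1-p}, \qquad i \geq 1.\]
To prove it, set $a_i := i^{1-p}$ and use the algebraic identity
\[(i+1)a_i - i a_{i+1} = a_{i+1} - (i+1)(a_{i+1}-a_i).\]
Applying the mean value theorem to $x \mapsto x^{1-p}$ on $[i,i+1]$ gives $a_{i+1}-a_i = (1-p)\xi^{-p}$ for some $\xi \in (i,i+1)$; since $\xi < i+1$ and $p>0$, we have $a_{i+1}-a_i \geq (1-p)(i+1)^{-p}$, whence $(i+1)(a_{i+1}-a_i) \geq (1-p)a_{i+1}$ and thus $(i+1)a_i - i a_{i+1} \leq p\,a_{i+1} = p(i+1)^{1-p}$. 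Multiplying by $\varsigma$ yields the estimate.

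The rest is straightforward. For $C$, note $\min_{t \geq 0}(t+1)\sigma_t = \varsigma$ (attained at $t=0$ by monotonicity), so the second term of $C$ is $2(\varsigma L_f + L_g)D^2/\varsigma$. For the supremum, the key estimate combined with the crude bound $(i+1)^{1-p} \leq (t+1)^{1-p}$ gives
\[\sum_{i \in [t]}(i+1)i(\sigma_{i-1}-\sigma_i) \leq p\varsigma\, t(t+1)^{1-p} = p\cdot (t+1)t\sigma_t,\]
so the supremum is at most $p \leq 2p$, yielding $C \leq (1+2p)(f_{\opt}-\min_{x\in X} f(x)) + 2(\varsigma L_f + L_g)D^2/\varsigma$. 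For $V$, multiplying the key estimate by $\sigma_i=\varsigma(i+1)^{-p}$ gives $(i+1)i(\sigma_{i-1}-\sigma_i)\sigma_i \leq p\varsigma^2(i+1)^{1-2p}$, and I split on the sign of $1-2p$. When $p \leq 1/2$, each summand is at most $(t+1)^{1-2p}$, so the sum is at most $p\varsigma^2 t(t+1)^{1-2p}$ and the ratio with $(t+1)t\sigma_t^2 = \varsigma^2 t(t+1)^{1-2p}$ is at most $p \leq 2p = 2p/\min\{1,2(1-p)\}$. When $p > 1/2$, $x\mapsto x^{1-2p}$ is decreasing, so an integral comparison yields $\sum_{i \in [t]}(i+1)^{1-2p} \leq \int_1^{t+1}x^{1-2p}dx \leq (t+1)^{2-2p}/(2-2p)$, giving a ratio bound of $p(t+1)/((2-2p)t) \leq p/(1-p) = 2p/\min\{1,2(1-p)\}$ for $t \geq 1$. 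Combining the cases produces the claim.

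The main obstacle is obtaining the key estimate in the form $p\varsigma(i+1)^{1-p}$ rather than $p\varsigma(i+1)i^{-p}$ or similar: with $(i+1)^{1-p}$ on the right, the subsequent sum matches the scale of $(t+1)t\sigma_t=\varsigma t(t+1)^{1-p}$ cleanly and uniformly in $t$, which is what permits the neat final constants. Once this is in hand, bounding $C$ and $V$ is just careful bookkeeping.
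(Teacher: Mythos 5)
Your proof is correct and follows essentially the same route as the paper's: verify the three conditions directly, bound the weighted sums via a mean-value-theorem estimate, and split on the sign of $1-2p$ for $V$. The only difference is that your key inequality $(i+1)i(\sigma_{i-1}-\sigma_i)\leq p\varsigma(i+1)^{1-p}$ (obtained by applying the MVT to $x^{1-p}$ rather than to $x^{-p}$) avoids the paper's crude step $i+1\leq 2i$ and so yields slightly sharper intermediate constants, which still imply the stated bounds on $C$ and $V$.
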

\begin{proof} [Proof of \cref{lemma: example of lambda}]
It is clear that the sequence $\{\sigma_t\}_{t \geq 0}$ satisfies Conditions \ref{IRCG-C1} and \ref{IRCG-C2}.
To validate \cref{IRCG-C3}, we have that
\begin{align*}
\lim_{t \to \infty} t\left(\frac{\sigma_t}{\sigma_{t+1}}-1\right)=\lim_{t \to \infty} t\left(\left(1+\frac{1}{t+1}\right)^p-1\right) = \lim_{t \to \infty} \frac{t}{t+1}\cdot \lim_{\delta\to 0}\frac{(1+\delta)^p-1}{\delta}=p
\end{align*}
where the last equality holds because the derivative of $f(x)=x^p$ at $x=1$ equals $p$.

For $t>1$, it follows from the mean value theorem that %
there exists $b_t \in (t,t+1)$ 
such that
$$t^{-p}-(t+1)^{-p} = -pb_t^{-(p+1)}(t-t-1) = pb_t^{-(p+1)} \leq pt^{-(p+1)}.$$
Hence, we observe that
\begin{align*}
\sum_{i \in [t]} (i+1)i(\sigma_{i-1}-\sigma_i)&\leq 2 \varsigma p \sum_{i \in [t]} i^{1-p} \leq {2 \varsigma p}t^{2-p}
\end{align*}
where the first inequality is due to $t+1\leq 2t$ and the second inequality holds because $t^{1-p}$ is an increasing function in $t$. This implies 
\[ \frac{\sum_{i \in [t]} (i+1)i(\sigma_{i-1}-\sigma_i)}{(t+1)t\sigma_t} \leq \frac{2p t^{2-p}}{t(t+1)^{1-p}} \leq 2p. \]
Since $\min_{t \geq 0} (t+1)\sigma_t = \varsigma$, we obtain
$$C \leq (1+2p)\left(f_{\opt}-\min_{x \in X} f(x)\right) + \frac{2(\varsigma L_f +L_g)D^2}{\varsigma}.$$
Using similar arguments, we observe that
\begin{align*}
 \sum_{i \in [t]}(i+1)i(\sigma_{i-1}-\sigma_i)\sigma_i \leq 2\varsigma^2 p \sum_{i \in [t]} i^{1-2p}.
\end{align*}
If $1-2p \geq 0$, then $t^{1-2p}$ is an increasing function in $t$ and thus
$$\sum_{i \in [t]}(i+1)i(\sigma_{i-1}-\sigma_i)\sigma_i \leq 2p \varsigma^2 t^{2(1-p)}.$$
Dividing both sides of this inequality by $t(t+1) \sigma_t^2$, we deduce that $V \leq 2p$. When $1-2p <0$, we have 
\begin{align*}
\sum_{i \in [t]}(i+1)i(\sigma_{i-1}-\sigma_i)\sigma_i \leq 2p\varsigma^2\left(1+\frac{1}{2-2p}\left(t^{2-2p} -1\right)\right)\leq  \varsigma^2\frac{p}{1-p} t^{2(1-p)}
\end{align*}
where the first inequality is deduced from the integral approximation of the sum $\sum_{i \in [t]} i^{1-2p} \leq 1 + \int_{i=1}^t i^{1-2p} \mathrm{d}i$.
Dividing both sides of this inequality by $t(t+1) \sigma_t^2$, we deduce that $V \leq {p}/({1-p})$. In both cases, $V \leq {2p}/{\min\{1,2(1-p)\}}$.
\end{proof}

\begin{theorem} \label{theorem: a convergence rate for PCG-BiO}
Let $\{z_t\}_{t \geq 1}$ be the iterates generated by \cref{alg:IR-CG} with step sizes $\{\alpha_t\}_{t \geq 0}$ chosen via any one of \cref{eq:openloop,eq:exactlinesearch,eq:inexactlinesearch}, and regularization parameters $\{\sigma_t\}_{t \geq 0}$ given by $\sigma_t := \varsigma (t+1)^{-p}$ for some chosen $\varsigma > 0$, $p \in (0,1)$. Under \cref{ass:smooth-functions}, for any $t \geq 1$ we have
\begin{align*}
    f(z_t)-f_{\opt} &\leq \frac{2(\varsigma L_f+L_g)D^2}{\varsigma (t+1)^{1-p}} = O(t^{-(1-p)}),\\
    g(z_t) - g_{\opt} &\leq \frac{\varsigma(1+2p) \left(f_{\opt}-\min_{x \in X} f(x)\right) + 2(\varsigma L_f +L_g)D^2}{\frac{\min\left\{ 1,2(1-p) \right\}}{\min\{1+2p,2\}} \cdot (t+1)^p} = O(t^{-p}).
\end{align*}
\end{theorem}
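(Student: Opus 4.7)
The plan is to treat this theorem as essentially a corollary that assembles \Cref{lemma:twoupperbounds} (generic bounds for $z_t$) with \Cref{lemma: example of lambda} (instantiation for the choice $\sigma_t=\varsigma(t+1)^{-p}$). First I would observe that \Cref{lemma: example of lambda} verifies \crefrange{IRCG-C1}{IRCG-C3} for the prescribed $\sigma_t$, so the hypotheses of \Cref{lemma:twoupperbounds} are met, giving
\[
f(z_t)-f_{\opt} \leq \frac{2(L_f \sigma_0+L_g)D^2}{(t+1)\sigma_t}, \qquad g(z_t)-g_{\opt} \leq C(1+V)\sigma_t.
\]

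For the outer-level bound, substituting $\sigma_0=\varsigma$ and $\sigma_t=\varsigma(t+1)^{-p}$ into the first inequality yields the claim directly, with $(t+1)\sigma_t=\varsigma(t+1)^{1-p}$ in the denominator. No further work is needed there.

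For the inner-level bound the task is algebraic: I would use the explicit bounds $C \leq (1+2p)(f_{\opt}-\min_X f) + 2(\varsigma L_f+L_g)D^2/\varsigma$ and $V \leq 2p/\min\{1,2(1-p)\}$ from \Cref{lemma: example of lambda}, and then simplify $1+V$. Checking the two cases $p\leq 1/2$ and $p>1/2$ separately, one sees $1+V \leq \min\{1+2p,2\}/\min\{1,2(1-p)\}$: in the first regime $\min\{1,2(1-p)\}=1$ so $1+V\leq 1+2p=\min\{1+2p,2\}$; in the second $\min\{1,2(1-p)\}=2(1-p)$ and $1+V\leq 1/(1-p)=2/(2(1-p))$, which equals $\min\{1+2p,2\}/\min\{1,2(1-p)\}$ since $\min\{1+2p,2\}=2$.

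Multiplying out $C(1+V)\sigma_t$ with these bounds and $\sigma_t=\varsigma(t+1)^{-p}$, the factor $\varsigma$ inside $C$ combines with the numerator factor $\varsigma$ of $\sigma_t$ to produce exactly the stated numerator $\varsigma(1+2p)(f_{\opt}-\min_X f)+2(\varsigma L_f+L_g)D^2$, while the $\min\{1+2p,2\}/\min\{1,2(1-p)\}$ factor gets pushed into the denominator in reciprocal form, yielding the claimed expression. I do not anticipate any real obstacle: everything is a direct substitution, and the only mildly nontrivial step is recognizing the compact form of $1+V$ via the two-case analysis above.
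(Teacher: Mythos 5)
Your proposal is correct and follows exactly the paper's route: the paper's proof of \cref{theorem: a convergence rate for PCG-BiO} is literally the one-line statement that it is an immediate consequence of \cref{lemma:twoupperbounds,lemma: example of lambda}, and your substitution of $\sigma_0=\varsigma$, $(t+1)\sigma_t=\varsigma(t+1)^{1-p}$, together with the two-case verification that $1+V\leq \min\{1+2p,2\}/\min\{1,2(1-p)\}$, fills in precisely the algebra the paper leaves implicit.
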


\begin{proof}[Proof of \cref{theorem: a convergence rate for PCG-BiO}]
This is an immediate consequence of \cref{lemma:twoupperbounds,lemma: example of lambda}.
\end{proof}

	\section{Acceleration under quadratic growth and strong convexity}\label{sec:acceleration}

In this section, we show that the convergence rates of \cref{alg:IR-CG} provided in \cref{theorem: a convergence rate for PCG-BiO} can be improved under the following three additional assumptions.

\begin{assumption}[quadratic growth of $g$]\label{ass:g-quadratic-growth}
	There exists $\kappa > 0$ such that
	\[ \kappa \min_{y \in X_{\opt}}\|x-y\|^2 \leq g(x)-g_{\opt}, \quad \forall x \in X. \]
\end{assumption}

\cref{ass:g-quadratic-growth} (see also \citep{DrusvyatskiyLewis2018,KerdreuxEtAl2022}) is satisfied for a variety of functions; we present one example below.
\begin{example}\label{ex:accelerated}
Suppose $g(x):= x^\top A x-2b^\top x$ and $X:= \{x: \|x\|_2 \leq 1\}$ with $A$ is a positive semi-definite matrix, $b \in \mathrm{col}(A)$. If either $\lambda_{\min}(A) >0, \|A^\dagger b\|_2 \leq 1$, or $\lambda_{\min}(A)=0, \|A^\dagger b\|_2 < 1$, then \citet[Lemma 3.7]{RujunXudong2022} show that \cref{ass:g-quadratic-growth} holds.
\epr
\end{example}

The next two \crefrange{ass:strongly-convex-f}{ass:strongly-convex-X} are on strong convexity of $f$ and $X$. \citet{GarberHazan2015} examined CG methods under \crefrange{ass:strongly-convex-f}{ass:strongly-convex-X} in the single-level setting. \citet{Molinaro2023} provided an equivalence between sets satisfying \cref{ass:strongly-convex-X} and so-called \emph{gauge sets}, while \citet[Sec. 5.2]{WangEtAl2024} showed that it is possible to obtain acceleration for optimizing $f$ over gauge sets without \cref{ass:strongly-convex-f}. Note that the set $X$ in \cref{ex:accelerated} is an instance of a set that satisfies \cref{ass:strongly-convex-X} below.
\begin{assumption}[strong convexity of $f$]\label{ass:strongly-convex-f}
	There exists $\alpha_f >0$ such that for any $x,y \in X$, %
	$$f(y) \geq f(x)+\nabla f(x)^\top (y-x)+\frac{\alpha_f}{2}\|y-x\|^2.$$
\end{assumption}

\begin{assumption}[strong convexity of $X$]\label{ass:strongly-convex-X}
	There exists $\alpha_X >0$ such that for any $x,y \in X$, $u \in \mathbb{R}^n$ such that $\|u\|=1$, $\gamma \in [0,1]$, it holds that
	$$\gamma x +(1-\gamma) y + \frac{\gamma(1-\gamma)\alpha_X}{2}\|x-y\|^2 u \in X.$$
\end{assumption}

In this section, we will study \cref{alg:IR-CG} with parameters set to $\sigma_t = \varsigma(t+1)^{-p}$. \cref{theorem: a convergence rate for PCG-BiO} established a convergence rate of $O(t^{-p})$ and $O(t^{-(1-p)})$ respectively for inner- and outer-levels for the averaged sequence $z_t$. However, \cref{thm:accelerated-quadratic-growth} below shows that we can get a faster rate of $O(t^{-\min\{2p,1\}})$ for the inner-level under \cref{ass:g-quadratic-growth}, with a comparable $O\left( t^{-\min\{p,1-p\}} \right)$ outer-level rate. If we additionally impose \crefrange{ass:strongly-convex-f}{ass:strongly-convex-X} then \cref{thm:accelerated-strongly-convex} shows that we get $O\left(t^{-\min\{2/3,1-2p/3,2-2p\}}\right)$ for the outer-level gap. Furthermore, these guarantees now apply to $x_t$ rather than $z_t$. For both \cref{thm:accelerated-quadratic-growth,thm:accelerated-strongly-convex}, we justify the optimal choice of $p=1/2$ for the regularization parameter in \cref{rem:p-choice-accelerated} below. This results in $O(t^{-1})$ inner-level rate and $O(t^{-1/2})$ outer-level rate under \cref{ass:g-quadratic-growth}, which accelerates to $O(t^{-2/3})$ outer-level rate under \crefrange{ass:strongly-convex-f}{ass:strongly-convex-X}.

Our analysis exploits the following lemma. Recall that $\Phi_t(x) := \sigma_t f(x) + g(x)$.
\begin{lemma}\label{lem:fg-bounds-accelerated}
	Suppose that \crefrange{ass:smooth-functions}{ass:g-quadratic-growth} hold. Define $G_f := \max_{y \in X_{\opt}} \|\grad f(y)\|_*$, and let $\{x_t\}_{t \geq 0}$ be the sequence generated by \cref{alg:IR-CG}. Then
	\begin{align*}
		-\frac{G_f}{\sqrt{\kappa}} \sqrt{g(x_t) - g_{\opt}} \leq f(x_t) - f_{\opt} &\leq \frac{1}{\sigma_t} (\Phi_t(x_t) - \Phi_t(x_{\opt}))\\
		0 \leq g(x_t) - g_{\opt} &\leq 2(\Phi_t(x_t) - \Phi_t(x_{\opt})) + \frac{G_f^2 \sigma_t^2}{\kappa}.
	\end{align*}
\end{lemma}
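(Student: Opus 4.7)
The plan is to establish the three inequalities in the order (upper bound on $f$), (lower bound on $f$), (upper bound on $g$), since the last bound follows by combining the first two with a short Young-inequality argument. Throughout, I use the fact that $x_{\opt} \in X_{\opt}$, so $g(x_{\opt}) = g_{\opt}$ and $f(x_{\opt}) = f_{\opt}$, and that $x_t \in X$ implies $g(x_t) \geq g_{\opt}$ (which also gives the trivial lower bound in the third inequality).

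\textbf{Upper bound on $f(x_t) - f_{\opt}$.} Expanding the definition of $\Phi_t$ gives the identity
\[ \Phi_t(x_t) - \Phi_t(x_{\opt}) = \sigma_t(f(x_t) - f_{\opt}) + (g(x_t) - g_{\opt}). \]
Since $g(x_t) - g_{\opt} \geq 0$, dividing by $\sigma_t > 0$ and dropping this nonnegative term yields the claim.

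\textbf{Lower bound on $f(x_t) - f_{\opt}$.} Let $y_t \in \argmin_{y \in X_{\opt}} \|x_t - y\|$. Then $y_t \in X_{\opt}$ so $f(y_t) \geq f_{\opt}$. Convexity of $f$ together with H\"older's inequality for the dual norm pair gives
\[ f(x_t) \geq f(y_t) + \grad f(y_t)^\top (x_t - y_t) \geq f_{\opt} - \|\grad f(y_t)\|_*\|x_t - y_t\| \geq f_{\opt} - G_f \|x_t - y_t\|. \]
Applying the quadratic growth assumption on $g$ (Assumption~\ref{ass:g-quadratic-growth}) yields $\|x_t - y_t\| \leq \sqrt{(g(x_t) - g_{\opt})/\kappa}$, which combined with the previous display produces the desired lower bound.

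\textbf{Upper bound on $g(x_t) - g_{\opt}$.} Reusing the identity from the first step, I can isolate the $g$-gap as
\[ g(x_t) - g_{\opt} = (\Phi_t(x_t) - \Phi_t(x_{\opt})) - \sigma_t (f(x_t) - f_{\opt}). \]
Substituting the lower bound from the second step converts this into a self-referential inequality
\[ g(x_t) - g_{\opt} \leq (\Phi_t(x_t) - \Phi_t(x_{\opt})) + \frac{G_f \sigma_t}{\sqrt{\kappa}}\sqrt{g(x_t) - g_{\opt}}. \]
Setting $u := \sqrt{g(x_t) - g_{\opt}}$, $A := \Phi_t(x_t) - \Phi_t(x_{\opt})$, $B := G_f \sigma_t / \sqrt{\kappa}$, this reads $u^2 \leq A + Bu$. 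Applying Young's inequality $Bu \leq B^2/2 + u^2/2$ and rearranging gives $u^2 \leq 2A + B^2$, which is exactly the advertised bound.

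There is no serious obstacle in this proof; the only moment that requires a little care is ensuring that the projection $y_t$ of $x_t$ onto $X_{\opt}$ used in the lower bound step satisfies both $f(y_t) \geq f_{\opt}$ (automatic from $y_t \in X_{\opt}$) and the distance estimate from quadratic growth, which is precisely how Assumption~\ref{ass:g-quadratic-growth} was phrased. The closing Young-inequality step is the only mildly non-routine algebraic manipulation.
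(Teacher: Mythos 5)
Your proof is correct and follows essentially the same route as the paper: the identity $\Phi_t(x_t)-\Phi_t(x_{\opt})=\sigma_t(f(x_t)-f_{\opt})+(g(x_t)-g_{\opt})$, the nearest point $y_t\in X_{\opt}$ combined with convexity, H\"older, and quadratic growth for the lower bound on $f$, and then substitution back to get a quadratic inequality in $\sqrt{g(x_t)-g_{\opt}}$. The only (cosmetic) difference is that you resolve that inequality via Young's inequality whereas the paper completes the square and argues by cases; both give the identical constant, and your version is arguably slightly cleaner.
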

\begin{proof}[Proof of \cref{lem:fg-bounds-accelerated}]
	Denote $f_t := f(x_t) - f_{\opt}$, $g_t := g(x_t) - g_{\opt}$ and $h_t := \Phi_t(x_t) - \Phi_t(x_{\opt}) = \sigma_t f_t + g_t$. For any $t \geq 0$, we define $y_t:= \argmin_{y \in X_{\opt}} \|y-x_t\|$. From convexity of $f$ and \cref{ass:g-quadratic-growth}, we have that
 \begin{align}
		f_t &\geq f(x_t)-f(y_t)\geq \grad f(y_t)^\top (x_t-y_t)\geq -\|\grad f(y_t)\|_* \|x_t-y_t\|\geq - \frac{G_f}{\sqrt{\kappa}} \sqrt{g_t}.\label{eq:fg-bound}
	\end{align}
	Therefore, $g_t - (G_f \sigma_t/\sqrt{\kappa})\sqrt{g_t}\leq h_t$. Denote $D_t := {G_f \sigma_t}/{\sqrt{\kappa}}$. Then we have
	\begin{align*}
		g_t - 2(D_t/2) \sqrt{g_t} &\leq h_t\quad 
		\iff \quad (\sqrt{g_t} - D_t/2)^2 \leq h_t + (D_t/2)^2.
	\end{align*}
	If $\sqrt{g_t} \leq D_t/2$, then $g_t \leq D_t^2/4$. Otherwise, the inequality becomes $\sqrt{g_t} \leq \sqrt{h_t + D_t^2/4} + D_t/2$. Squaring both sides and using the fact that $(a+b)^2 \leq 2a^2 + 2b^2$, we get $g_t \leq 2h_t + D_t^2$.
	Notice also that $x_t \in X$, thus $g_t \geq 0$. Therefore we have $f_t \leq h_t/\sigma_t$.
\end{proof}

\begin{theorem}\label{thm:accelerated-quadratic-growth}
Let $\{x_t,z_t\}_{t \geq 1}$ be the iterates generated by \cref{alg:IR-CG} with step sizes $\{\alpha_t\}_{t \geq 0}$ chosen via any one of \cref{eq:openloop,eq:exactlinesearch,eq:inexactlinesearch}, and regularization parameters $\{\sigma_t\}_{t \geq 0}$ given by $\sigma_t := \varsigma (t+1)^{-p}$ for some chosen $\varsigma > 0$, $p\in(0,1)$. Define $G_f := \max_{y \in X_{\opt}} \|\grad f(y)\|_*$. Under \crefrange{ass:smooth-functions}{ass:g-quadratic-growth}, for any $t \geq 1$ we have
\begin{align*}
		f(x_t)-f_{\opt} &\leq \frac{W}{2 \varsigma (t+1)^{\min\{p,1-p\}}} = O(t^{-\min\{p,1-p\}}), \\
   g(x_t) - g_{\opt} &\leq \frac{W}{(t+1)^{\min\{2p,1\}}} = O(t^{-\min\{2p,1\}}),
\end{align*}
where
\[ W := \max\left\{ \begin{aligned}
    &g(x_0) - g_{\opt}\\
    &\max_{w \in \bbR} \left\{ w : 4 (L_f \sigma_0 + L_g) D^2 + \frac{G_f^2 \varsigma^2}{\kappa} + \frac{p2^{\min\{2p+2,3\}}\sqrt{G_f^2 \varsigma^2 w}}{ \sqrt{\kappa}} \geq w\right\}
\end{aligned}\right\}. \]
\end{theorem}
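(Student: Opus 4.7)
The plan is to prove the inner-level bound $g(x_t)-g_{\opt}\leq W(t+1)^{-\eta}$ with $\eta:=\min\{2p,1\}$ by strong induction on $t\geq 0$, and then derive the outer-level bound from the weighted-sum identity \eqref{eq:weighted-sum-f} of \cref{lemma: common bound} combined with the one-sided estimate $f(x_i)-f_{\opt}\geq -G_f\sqrt{g(x_i)-g_{\opt}}/\sqrt{\kappa}$ supplied by \cref{lem:fg-bounds-accelerated}. The base case $t=0$ is immediate from $W\geq g(x_0)-g_{\opt}$.

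For the inductive step, assume $g(x_s)-g_{\opt}\leq W(s+1)^{-\eta}$ for every $s\leq t$, write $h_s:=\Phi_s(x_s)-\Phi_s(x_{\opt})$, and apply inequality \eqref{eq:weighted-sum} of \cref{lemma: common bound} at index $t+1$ together with the quadratic-growth consequence $-(f(x_i)-f_{\opt})\leq G_f\sqrt{g(x_i)-g_{\opt}}/\sqrt{\kappa}$ to get
\[ (t+2)(t+1)\,h_{t+1}\;\leq\;2(L_f\sigma_0+L_g)D^2(t+1)+\frac{G_f}{\sqrt{\kappa}}\sum_{i\in[t+1]}(i+1)i(\sigma_{i-1}-\sigma_i)\sqrt{g(x_i)-g_{\opt}}. \]
For $i\leq t$ I substitute $\sqrt{g(x_i)-g_{\opt}}\leq\sqrt{W}(i+1)^{-\eta/2}$ from the induction hypothesis, while for $i=t+1$ I invoke the unconditional bound $\sqrt{g(x_{t+1})-g_{\opt}}\leq\sqrt{2h_{t+1}}+G_f\sigma_{t+1}/\sqrt{\kappa}$, which is a rewriting of \cref{lem:fg-bounds-accelerated}. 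Using $(i+1)i(\sigma_{i-1}-\sigma_i)\leq 2\varsigma p\, i^{1-p}$ (via the mean value theorem applied to $s\mapsto(s+1)^{-p}$) and estimating $\sum_{i=1}^t i^{1-p-\eta/2}$ separately in the two cases $p\leq 1/2$ and $p>1/2$ reduces the display to a quadratic inequality $h_{t+1}\leq\tilde b+\tilde a\sqrt{h_{t+1}}$, whose solution controls $h_{t+1}$.

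Combining that bound with $g(x_{t+1})-g_{\opt}\leq 2h_{t+1}+G_f^2\sigma_{t+1}^2/\kappa$ from \cref{lem:fg-bounds-accelerated} and multiplying by $(t+2)^\eta$, and using $(t+2)^\eta/(t+1)^\eta\leq 2^\eta$ to absorb polynomial remainders, should yield an inequality of the form $(t+2)^\eta(g(x_{t+1})-g_{\opt})\leq A+B\sqrt{W}$ with $A=4(L_f\sigma_0+L_g)D^2+G_f^2\varsigma^2/\kappa$ and $B=p\cdot 2^{\min\{2p+2,3\}}G_f\varsigma/\sqrt{\kappa}$. The self-consistency condition $A+B\sqrt{W}\leq W$ built into the definition of $W$ then forces $g(x_{t+1})-g_{\opt}\leq W(t+2)^{-\eta}$, closing the induction.

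For the outer-level bound I reuse identity \eqref{eq:weighted-sum-f}, now substituting the just-established $\sqrt{g(x_i)-g_{\opt}}\leq\sqrt{W}(i+1)^{-\eta/2}$ together with $-(f(x_i)-f_{\opt})\leq G_f\sqrt{g(x_i)-g_{\opt}}/\sqrt{\kappa}$, and divide by $(t+1)t\sigma_t=\varsigma t(t+1)^{1-p}$. The same power-sum estimate as in the inner-level analysis delivers the claimed $O(t^{-\min\{p,1-p\}})$ rate, with the leading constant emerging as $W/(2\varsigma)$. The main obstacle throughout is the circular dependence created by the $i=t+1$ term $\sqrt{g(x_{t+1})-g_{\opt}}$ inside the $h_{t+1}$ bound; the nonstandard definition of $W$ as the largest $w$ satisfying $A+B\sqrt{w}\geq w$ (together with $W\geq g(x_0)-g_{\opt}$) is designed precisely to absorb this self-reference and guarantee that the inductive inequality closes at every step.
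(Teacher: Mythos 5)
Your proposal is correct and follows the same overall strategy as the paper: both prove the inner-level bound by induction using the self-consistent definition of $W$, with \cref{lem:fg-bounds-accelerated} supplying the two conversions between $f(x_t)-f_{\opt}$, $g(x_t)-g_{\opt}$ and $h_t := \Phi_t(x_t)-\Phi_t(x_{\opt})$, and then read off the outer-level rate from $f(x_t)-f_{\opt} \leq h_t/\sigma_t$. The one genuine difference is in how the inductive step is closed. The paper unrolls the one-step recursion \cref{ineq2:lemma:recursive-rule} into the bound \cref{eq:h-bound-accelerated} on $h_t$, whose sum runs over $\sqrt{g(x_{i-1})-g_{\opt}}$ for $i \in [t]$, so the induction hypothesis covers every term and no self-reference arises. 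You instead start from the already-summed inequality \cref{eq:weighted-sum} at index $t+1$, whose sum contains the very term $\sqrt{g(x_{t+1})-g_{\opt}}$ being bounded; you break this circularity by substituting $\sqrt{g(x_{t+1})-g_{\opt}} \leq \sqrt{2h_{t+1}} + G_f \sigma_{t+1}/\sqrt{\kappa}$ and solving the resulting quadratic inequality in $\sqrt{h_{t+1}}$. Your route is arguably the more careful one: the lower bound on $f(x_{t+1})-f_{\opt}$ genuinely produces $\sqrt{g(x_{t+1})-g_{\opt}}$ rather than $\sqrt{g(x_t)-g_{\opt}}$, so the extra quadratic step is the honest way to close the loop, and the additional terms it generates, of order $(\sigma_t-\sigma_{t+1})\sqrt{h_{t+1}}$ and $(\sigma_t-\sigma_{t+1})\sigma_{t+1}$, decay strictly faster than $(t+2)^{-\min\{2p,1\}}$, so the claimed rates are unaffected. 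The only caveat is that these extra terms are not obviously absorbed by the specific constant $W$ stated in the theorem; to land exactly on that $W$ you would need to check that they fit inside the slack of the defining inequality $A + B\sqrt{w} \geq w$, or else enlarge $W$ by a constant factor.
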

\begin{proof}[Proof of \cref{thm:accelerated-quadratic-growth}]
Denote $f_t := f(x_t) - f_{\opt}$, $g_t := g(x_t) - g_{\opt}$ and $h_t := \Phi_t(x_t) - \Phi_t(x_{\opt}) = \sigma_t f_t + g_t$. \cref{lem:fg-bounds-accelerated} gives us that for $t \geq 1$, $f_t \geq - \sqrt{ {G_f^2 g_t}/\kappa }$. By \cref{ineq2:lemma:recursive-rule} in \cref{lemma:recursive-rule} we have for $t \geq 0$,
\[ h_{t+1} + (\sigma_t - \sigma_{t+1}) (f(x_{t+1}) - f_{\opt}) \leq \frac{t}{t+2} h_t + \frac{2 (L_f \sigma_t + L_g) D^2}{(t+2)^2}. \]
Combining with the above inequality, this implies
\[ h_{t+1} \leq \frac{t}{t+2} h_t + \frac{2 (L_f \sigma_t + L_g) D^2}{(t+2)^2} + \sqrt{\frac{G_f^2}{\kappa}} (\sigma_t - \sigma_{t+1}) \sqrt{g_t}. \]
Solving for this recursion (using the same technique as the proof of \cref{lemma: common bound}) gives the following bound on $h_t$:
\begin{equation}\label{eq:h-bound-accelerated}
h_t \leq \frac{2 (L_f \sigma_0 + L_g) D^2}{t+1} +  \frac{1}{t(t+1)} \sqrt{\frac{G_f^2} {\kappa}}\sum_{i \in [t]} i(i+1) (\sigma_{i-1} - \sigma_i) \sqrt{g_{i-1}}.
\end{equation}
Then \cref{lem:fg-bounds-accelerated} gives
\begin{align*}
g_t &\leq 2 h_t + \frac{G_f^2 \sigma_t^2}{\kappa}\\
&\leq \frac{4 (L_f \sigma_0 + L_g) D^2}{t+1} +  \frac{2}{t(t+1)} \sqrt{\frac{G_f^2} {\kappa}}\sum_{i \in [t]} i(i+1) (\sigma_{i-1} - \sigma_i) \sqrt{g_{i-1}} + \frac{G_f^2 \sigma_t^2}{\kappa}.
\end{align*}
Setting $\sigma_t = \varsigma(t+1)^{-p}$, we will show by induction that $g_t \leq {W}{(t+1)^{-\min\{2p,1\}}}$ for $t \geq 0$.
The base case $t=0$ follows since $W \geq g_0$. Now suppose that it holds for all $i \leq t$, for some $t \geq 0$. Then
\begin{align*}
	g_{t+1} &\leq \frac{4 (L_f \sigma_0 + L_g) D^2}{t+2} +  \frac{2\sqrt{G_f^2 /\kappa}}{(t+1)(t+2)} \sum_{i \in [t+1]} i(i+1) (\sigma_{i-1} - \sigma_i) \sqrt{g_{i-1}} + \frac{G_f^2 \sigma_{t+1}^2}{\kappa}\\
	&\leq \frac{4 (L_f \sigma_0 + L_g) D^2 + G_f^2 \varsigma^2/\kappa}{(t+2)^{\min\{2p,1\}}}\\
	&\quad +  \frac{2\sqrt{G_f^2 \varsigma^2 W/\kappa}}{(t+1)(t+2)} \sum_{i \in [t+1]} i(i+1) \left(\frac{1}{i^p} - \frac{1}{(i+1)^p}\right) \frac{1}{i^{\min \{p,1/2\}}}\\
	&\leq \frac{4 (L_f \sigma_0 + L_g) D^2 + G_f^2 \varsigma^2/\kappa}{(t+2)^{\min\{2p,1\}}} +  \frac{4p\sqrt{G_f^2 \varsigma^2 W/\kappa}}{(t+1)(t+2)} \sum_{i \in [t+1]} \frac{1}{i^{\min\{2p-1,p-1/2,0\}}}\\
	&\leq \frac{4 (L_f \sigma_0 + L_g) D^2 + G_f^2 \varsigma^2/\kappa}{(t+2)^{\min\{2p,1\}}} +  \frac{4p\sqrt{G_f^2 \varsigma^2 W/\kappa}}{(t+1)^{\min\{2p,p+1/2,1\}}}\\
 &= \frac{4 (L_f \sigma_0 + L_g) D^2 + G_f^2 \varsigma^2/\kappa}{(t+2)^{\min\{2p,1\}}} +  \frac{4p\sqrt{G_f^2 \varsigma^2 W/\kappa}}{(t+1)^{\min\{2p,1\}}}\\
 &\leq  \frac{4 (L_f \sigma_0 + L_g) D^2 + G_f^2 \varsigma^2/\kappa}{(t+2)^{\min\{2p,1\}}} +  \frac{p2^{\min\{2p+2,3\}}\sqrt{G_f^2 \varsigma^2 W/\kappa}}{(t+2)^{\min\{2p,1\}}}
\end{align*}
where the third inequality holds because $(i+1)\leq 2i$ for any $i\geq 1$, the mean value theorem implies that $i^{-p}-(i+1)^{-p}\leq p i^{-(p+1)}$ for any $i\geq 1$, and $\min\{2p-1,p-1/2,0\}\leq \min\{2p-1,p-1/2\}$.

Here, the final term is at most $W/(t+2)$ since we chose $W$ to satisfy
\[ W \geq \max_{w \in \bbR} \left\{ w : 4 (L_f \sigma_0 + L_g) D^2 + \frac{G_f^2 \varsigma^2}{\kappa} + \frac{p2^{\min\{2p+2,3\}}\sqrt{G_f^2 \varsigma^2 w}}{ \sqrt{\kappa}} \geq w\right\}. \]
Thus by induction $g_{t+1} \leq W(t+2)^{-\min\{2p,1\}}$ for all $t \geq 0$. Substituting this bound into \cref{eq:h-bound-accelerated}, using similar reasoning to the above we get $$h_t \leq \frac{2 (L_f \sigma_0 + L_g) D^2}{(t+2)^{\min\{2p,1\}}} +  \frac{p2^{\min\{2p+1,2\}}\sqrt{G_f^2 \varsigma^2 W/\kappa}}{(t+2)^{\min\{2p,1\}}}.$$ 
By the definition of $W$, the right hand side is at most $W/(2(t+2)^{\min\{2p,1\}})$. Therefore $f_t \leq h_t/\sigma_t \leq W/(2 \varsigma (t+2)^{\min\{p,1-p\}})$, as required.
\end{proof}

We now examine how \crefrange{ass:strongly-convex-f}{ass:strongly-convex-X} can additionally be used to obtain a faster rate for $f$. The analysis relies on the following technical lemma to solve a recursion.
\begin{lemma}\label{lem:h-recursion}
	Let $\{b_t,h_t,\sigma_t\}_{t \geq 1}$ be two sequences of real numbers that satisfy
	\[ h_{t+1} \leq h_t\cdot \max\left\{ \frac{1}{2}, \ 1-M \sqrt{\sigma_t [h_t]_+} \right\} + b_t \]
	for some $M > 0$. Suppose that there exist constants $H,\tau,k$ such that $h_{\tau} \leq H/(\tau+2)^k$, and for all $t \geq \tau$, we have
	\[ \frac{H}{2(t+2)^k} + b_t \leq \frac{H}{(t+3)^k}, \quad  \frac{H k}{(t+2)^{k+1}} + b_t \leq \frac{MH^{3/2}\sqrt{\sigma_t}}{\sqrt{2} (t+2)^{3k/2}}. \]
	Then
	\[ h_t \leq \frac{H}{(t+2)^k}, \quad \forall t \geq \tau. \]
\end{lemma}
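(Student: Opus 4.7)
\medskip
\noindent\textbf{Proof proposal.} My plan is to induct on $t \geq \tau$. The base case $t=\tau$ is immediate from $h_\tau \leq H/(\tau+2)^k$. For the inductive step, assuming $h_t \leq H/(t+2)^k$, I would analyze the one-step recursion by splitting according to which term attains the maximum and the sign of $h_t$. The easy cases are: (i) $h_t \leq 0$, where the factor equals $1$, so $h_{t+1} \leq h_t + b_t \leq b_t$, which is at most $H/(t+3)^k$ by the first hypothesis (since that hypothesis forces $b_t \leq H/(t+3)^k - H/(2(t+2)^k) \leq H/(t+3)^k$); and (ii) $h_t > 0$ with $M\sqrt{\sigma_t h_t} \geq 1/2$ (equivalently $h_t \geq 1/(4M^2\sigma_t)$), where the factor equals $1/2$, giving $h_{t+1} \leq h_t/2 + b_t \leq H/(2(t+2)^k) + b_t \leq H/(t+3)^k$ directly from the first hypothesis.

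The interesting case is $0 < h_t < 1/(4M^2\sigma_t)$, where $h_{t+1} \leq \phi(h_t) + b_t$ with $\phi(h) := h - M\sqrt{\sigma_t}\,h^{3/2}$. A short derivative check shows $\phi'(h) = 1 - \tfrac{3}{2}M\sqrt{\sigma_t h}$ is positive on $[0, 4/(9M^2\sigma_t)]$, so $\phi$ is monotone increasing on the relevant interval $[0, 1/(4M^2\sigma_t)]$. I would then split once more according to whether the inductive bound $H/(t+2)^k$ lies below or above the case-boundary $1/(4M^2\sigma_t)$. If $H/(t+2)^k \leq 1/(4M^2\sigma_t)$, I use monotonicity to get $\phi(h_t) \leq \phi(H/(t+2)^k)$ and then apply the mean-value inequality $H/(t+2)^k - H/(t+3)^k \leq kH/(t+2)^{k+1}$, so that the claim $h_{t+1} \leq H/(t+3)^k$ reduces precisely to the second hypothesis
\[ \frac{kH}{(t+2)^{k+1}} + b_t \leq \frac{MH^{3/2}\sqrt{\sigma_t}}{\sqrt{2}\,(t+2)^{3k/2}}; \]
the factor $1/\sqrt{2}$ there provides a convenient slack since one only needs the right-hand side bounded by $MH^{3/2}\sqrt{\sigma_t}/(t+2)^{3k/2}$.

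If instead $H/(t+2)^k > 1/(4M^2\sigma_t)$, then because $\phi$ is still increasing on the case-range, $\phi(h_t) \leq \phi(1/(4M^2\sigma_t)) = 1/(8M^2\sigma_t)$ by a one-line computation, and the assumed inequality $H/(t+2)^k > 1/(4M^2\sigma_t)$ gives $H/(2(t+2)^k) > 1/(8M^2\sigma_t)$, so once again the first hypothesis yields $h_{t+1} \leq 1/(8M^2\sigma_t) + b_t \leq H/(2(t+2)^k) + b_t \leq H/(t+3)^k$.

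I expect the main obstacle to be simply bookkeeping — keeping straight the three thresholds $H/(t+2)^k$, $1/(4M^2\sigma_t)$, and the stationary point $4/(9M^2\sigma_t)$ of $\phi$, and verifying that in every combination the appropriate standing hypothesis kicks in. Once the case split is laid out, each subcase reduces to a direct algebraic verification using either the first or the second hypothesis, with the MVT estimate for $H/(t+2)^k - H/(t+3)^k$ being the only nontrivial inequality invoked.
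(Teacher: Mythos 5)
Your proof is correct. It shares the paper's overall skeleton — induction on $t$, the mean-value estimate $H/(t+2)^k - H/(t+3)^k \leq kH/(t+2)^{k+1}$, and discharging the final inequality via the two standing hypotheses — but it handles the decisive case by a different decomposition. The paper splits on whether $h_t \leq H/(2(t+2)^k)$ and, in the hard subcase, bounds the two occurrences of $h_t$ in $h_t\bigl(1-M\sqrt{\sigma_t h_t}\bigr)$ \emph{separately}: the leading $h_t$ from above by $H/(t+2)^k$ (inductive hypothesis) and the $h_t$ under the square root from below by $H/(2(t+2)^k)$ (the case condition). This sidesteps any monotonicity analysis of $\phi(h)=h-M\sqrt{\sigma_t}\,h^{3/2}$ and is precisely what produces the factor $1/\sqrt{2}$ in the second hypothesis. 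You instead split on whether $H/(t+2)^k$ lies below the threshold $1/(4M^2\sigma_t)$ and invoke monotonicity of $\phi$ on $[0,\,4/(9M^2\sigma_t)]$, which lets you plug the inductive bound directly into $\phi$; this yields the cleaner coefficient $MH^{3/2}\sqrt{\sigma_t}/(t+2)^{3k/2}$ without the $\sqrt{2}$, so for you the stated hypothesis carries genuine slack, at the cost of the derivative check and one extra threshold to track. Your boundary subcase ($H/(t+2)^k > 1/(4M^2\sigma_t)$, handled via $\phi(h_t)\leq \phi(1/(4M^2\sigma_t)) = 1/(8M^2\sigma_t) < H/(2(t+2)^k)$) is a clean substitute for the paper's Case 1. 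Both arguments are complete; note only that each tacitly uses $H\geq 0$ (forced anyway by the appearance of $H^{3/2}$ in the hypotheses) when discarding the term $H/(2(t+2)^k)$ in your case of nonpositive $h_t$.
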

\begin{proof}[Proof of \cref{lem:h-recursion}]
We proceed by induction
and assume that $h_t \leq H/(t+2)^k$ up to some $t \geq \tau$. If ${1}/{2} \geq 1- M \sqrt{\sigma_t [h_t]_+}$, then 
	\[ h_{t+1} \leq \frac{h_t}{2}+b_t \leq \frac{H}{2(t+2)^k} + b_t \leq \frac{H}{(t+3)^k}. \]
	If $1/2< 1- M \sqrt{\sigma_t [h_t]_+}$, then we consider two subcases.
	
	\emph{Case 1.} If $h_t \leq H/(2(t+2)^{k})$, then $h_{t+1} \leq h_t+b_t \leq {H}/{(t+3)^k}$.
	
	\emph{Case 2.} If $h_t > H/(2(t+2)^{k})$, then
	\begin{align*}
		h_{t+1} &\leq \frac{H}{(t+2)^k} \left(1- \frac{M\sqrt{H \sigma_t}}{\sqrt{2(t+2)^{k}}} \right)+b_t= \frac{H}{(t+2)^k} +b_t - \frac{MH^{3/2}\sqrt{\sigma_t}}{\sqrt{2} (t+2)^{3k/2}}.
	\end{align*}
	For any $t \geq 0$, by the mean value theorem, there exists $c_t \in (t+2,t+3)$ such that
	$$\frac{1}{(t+2)^k}-\frac{1}{(t+3)^k} = \frac{k}{c_t^{k+1}} \leq\frac{k}{(t+2)^{k+1}}.$$
Then it follows that
	\begin{align*}
		h_{t+1} &\leq \frac{H}{(t+3)^k} + \frac{H k}{(t+2)^{k+1}} +b_t - \frac{MH^{3/2}\sqrt{\sigma_t}}{\sqrt{2} (t+2)^{3k/2}} \leq \frac{H}{(t+3)^k}.
	\end{align*}
	Then the result holds by induction.
\end{proof}

\begin{theorem}\label{thm:accelerated-strongly-convex}
Consider the same conditions as in \cref{thm:accelerated-quadratic-growth}. Additionally, suppose that \crefrange{ass:strongly-convex-f}{ass:strongly-convex-X} hold, and that $\{\alpha_t\}_{t \geq 0}$ in \cref{alg:IR-CG} are chosen via \cref{eq:inexactlinesearch} or \cref{eq:exactlinesearch}. Then for $t \geq 1$,
\begin{align*}
    f(x_t) - f_{\opt} &\leq \frac{H}{\varsigma (t+2)^{\min\{2/3,1-2p/3,2-2p\}}} = O(t^{-\min\{2/3,1-2p/3,2-2p\}}),\\ 
    g(x_t) - g_{\opt} &\leq \frac{W}{(t+1)^{\min\{2p,1\}}} = O(t^{-\min\{2p,1\}}),
\end{align*}
where $W$ is as defined in \cref{thm:accelerated-quadratic-growth}, and
\begin{align*}
    M &:= \frac{\alpha_X \sqrt{\alpha_f}}{8\sqrt{2}(\sigma_0 L_f + L_f)},\\
    k&:= \min\{p+2/3,p/3+1,2-p\}\\
    H &:= \min_{h \geq 0} \left\{ h : \forall t \geq 1, \begin{aligned}
    \varsigma 3^{k} (\Phi_1(x_1) - \Phi_1(x_{\opt})) &\leq h,\\
	\frac{h}{2(t+2)^k} + \frac{\sqrt{G_f W \varsigma^2} p2^{p+1}}{\sqrt{\kappa}(t+2)^{\min\{2p+1,p+3/2\}}} &\leq \frac{h}{(t+3)^k},\\
\frac{h k}{(t+2)^{k+1}} + \frac{\sqrt{G_f W \varsigma^2}p2^{p+1}}{\sqrt{\kappa}(t+2)^{\min\{2p+1,p+3/2\}}} &\leq \frac{Mh^{3/2} \sqrt{\sigma_t}}{\sqrt{2} (t+2)^{3k/2+p/4}}
\end{aligned} \right\}.
\end{align*}
Furthermore, the constant $H$ as defined above is finite.
\end{theorem}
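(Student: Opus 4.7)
The plan is to treat the two bounds separately. The inner-level bound on $g(x_t) - g_{\opt}$ is inherited verbatim from \cref{thm:accelerated-quadratic-growth}, whose proof uses neither \cref{ass:strongly-convex-f} nor \cref{ass:strongly-convex-X}. For the outer-level bound, I will set $h_t := \Phi_t(x_t) - \Phi_t(x_{\opt})$, derive a refined per-iteration recursion matching the hypothesis of \cref{lem:h-recursion}, invoke that lemma, and translate the resulting bound on $h_t$ to one on $f_t := f(x_t) - f_{\opt}$ via $f_t \leq h_t/\sigma_t$ from \cref{lem:fg-bounds-accelerated}.

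The key derivation combines three ingredients. First, \cref{ass:strongly-convex-X} yields a Garber--Hazan-type inequality for the FW direction, $-\nabla\Phi_t(x_t)^\top(v_t - x_t) \geq (\alpha_X\|\nabla\Phi_t(x_t)\|_*/2)\|v_t - x_t\|^2$. Second, \cref{ass:strongly-convex-f} makes $\Phi_t = \sigma_t f + g$ a $\sigma_t\alpha_f$-strongly convex function; combined with the optimality condition for $x_{\opt}$ on $X$, this gives $\|\nabla\Phi_t(x_t)\|_* \geq \sqrt{2\sigma_t\alpha_f\, h_t}$. Third, substituting both into \cref{ineq1:lemma:recursive-rule} and optimizing over $\alpha \in [0,1]$ splits into two regimes ($\alpha^{\ast} = 1$ versus $\alpha^{\ast} < 1$), which become the two arguments of the outer maximum in the recursion
\[
h_{t+1} \leq h_t \max\left\{\tfrac{1}{2},\, 1 - M\sqrt{\sigma_t [h_t]_+}\right\} + b_t,
\]
with $M = \alpha_X\sqrt{\alpha_f}/(8\sqrt{2}(\sigma_0 L_f + L_g))$. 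The residual $b_t$ comes from the telescoping cross term $(\sigma_t - \sigma_{t+1})(f(x_{t+1}) - f_{\opt})$; bounding its negative part via $|f(x_{t+1}) - f_{\opt}| \leq \sqrt{G_f^2 g_{t+1}/\kappa}$ from \cref{lem:fg-bounds-accelerated} and substituting the already-obtained $g_{t+1} \leq W(t+2)^{-\min\{2p,1\}}$ yields exactly the expression for $b_t$ that appears in the theorem's definition of $H$.

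Finally, I invoke \cref{lem:h-recursion} with this recursion, the stated constant $M$, and the exponent $k = \min\{p+2/3,\, p/3+1,\, 2-p\}$. The value of $k$ is forced by balancing the three quantities appearing in the lemma's second hypothesis: the discrete derivative $Hk/(t+2)^{k+1}$, the residual $b_t$, and the decrement proportional to $\sqrt{\sigma_t}\, h^{3/2}$. These three constraints translate directly into the three displayed conditions in the definition of $H$ (the auxiliary $(t+2)^{p/4}$ factor in the third provides the slack needed to satisfy both hypotheses of \cref{lem:h-recursion} simultaneously). Finiteness of $H$ is then immediate: each defining constraint has an $h^{3/2}$ or linear-in-$h$ term on its right-hand side dominating at most $\sqrt{h}$ or a constant on its left-hand side as $h \to \infty$, so the feasible set is a non-empty upper-unbounded interval. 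Combining $h_t \leq H/(t+2)^k$ with $f_t \leq h_t/\sigma_t$ yields $f_t \leq H/(\varsigma(t+2)^{k-p})$, and $k - p = \min\{2/3,\, 1-2p/3,\, 2-2p\}$ matches the claimed rate.

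The main obstacle is step three: the optimization over $\alpha$ after substituting the two structural inequalities requires a careful case split, and in parallel the estimate on $b_t$ must be tight enough to accommodate the regime where $k = p/3 + 1$ binds (roughly $p \in (1/2, 3/4)$) --- any looser estimate would force a strictly worse exponent. This step is also where the restriction to the closed-loop and line-search step rules \cref{eq:inexactlinesearch,eq:exactlinesearch} becomes essential, as only these activate the full flexibility of $\alpha$ in \cref{ineq1:lemma:recursive-rule}; the open-loop schedule \cref{eq:openloop} fixes $\alpha = \bar{\alpha}_t$ and cannot exploit the larger descent available from strong convexity.
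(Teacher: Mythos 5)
Your proposal follows essentially the same route as the paper: the inner-level bound is inherited from \cref{thm:accelerated-quadratic-growth}, the outer-level bound comes from a refined recursion $h_{t+1} \leq h_t\max\{1/2,\,1-M\sqrt{\sigma_t[h_t]_+}\}+b_t$ obtained by combining \cref{ass:strongly-convex-X} and \cref{ass:strongly-convex-f} inside \cref{ineq1:lemma:recursive-rule}, the residual $b_t=-(\sigma_t-\sigma_{t+1})f_{t+1}$ is controlled via \cref{lem:fg-bounds-accelerated} and the already-established $g_t$ bound, and \cref{lem:h-recursion} is invoked with the same exponent $k$. Two steps in your sketch would fail as literally written, though both are repairable by the paper's own devices. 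First, you anchor the strong-convexity gradient-norm bound at $x_{\opt}$, writing ``the optimality condition for $x_{\opt}$ on $X$''; but $x_{\opt}$ minimizes $f$ over $X_{\opt}$, not $\Phi_t$ over $X$, so $\nabla\Phi_t(x_{\opt})^\top(x-x_{\opt})\geq 0$ need not hold and the inequality $\Phi_t(x_t)-\Phi_t(x_{\opt})\geq \tfrac{\alpha_f\sigma_t}{2}\|x_t-x_{\opt}\|^2$ does not follow. The paper instead introduces $w_t:=\argmin_{x\in X}\Phi_t(x)$, applies the optimality condition there, and then uses $\Phi_t(x_t)-\Phi_t(w_t)\geq [h_t]_+$ to recover a bound in terms of $h_t$; your argument needs this substitution. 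Second, the pure Garber--Hazan inequality $-\nabla\Phi_t(x_t)^\top(v_t-x_t)\gtrsim \alpha_X\|\nabla\Phi_t(x_t)\|_*\|v_t-x_t\|^2$ alone cannot produce the multiplicative factor $\max\{1/2,\,1-M\sqrt{\sigma_t[h_t]_+}\}$: you must split the Frank--Wolfe gap half-and-half, keeping $\tfrac12\nabla\Phi_t(x_t)^\top(x_{\opt}-x_t)\leq -\tfrac12 h_t$ from convexity alongside the quadratic term, exactly as the paper does by inserting the point $\tfrac12(x_t+v_t)+\tfrac{\alpha_X}{8}\|v_t-x_t\|^2u$. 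With those two repairs (and constant-factor bookkeeping, e.g.\ $L_g$ rather than $L_f$ in the denominator of $M$, which your sketch actually gets right relative to the theorem's apparent typo), your outline coincides with the paper's proof, including the case split over $\alpha$, the choice $k=\min\{p+2/3,\,p/3+1,\,2-p\}$, and the finiteness argument for $H$.
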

\begin{proof}[Proof of \cref{thm:accelerated-strongly-convex}]
Denote $f_t := f(x_t) - f_{\opt}$, $g_t := g(x_t) - g_{\opt}$ and $h_t := \Phi_t(x_t) - \Phi_t(x_{\opt}) = \sigma_t f_t + g_t$. From \cref{lemma:recursive-rule}, since $\alpha_t$ is chosen via \cref{eq:inexactlinesearch} or \cref{eq:exactlinesearch}, for any $\alpha \in [0,1]$ we have
\[ h_{t+1}+(\sigma_t-\sigma_{t+1})f_{t+1} \leq h_t+\alpha \grad \Phi_t(x_t)^\top (v_t-x_t)+\frac{(\sigma_t L_f+L_g)\alpha^2}{2}\|v_t-x_t\|^2.
\]
Since $x_t,v_t \in X$,
we have from \cref{ass:strongly-convex-X} that for any $u \in \mathbb{R}^n$ with $\|u\|=1$,
\[ \frac{1}{2}(x_t+v_t)+\frac{\alpha_X}{8}\|v_t-x_t\|^2 u \in X.\]
By our choice of $v_t$, we deduce that
\begin{align*}
	\nabla \Phi_t(x_t)^\top (v_t-x_t) &\leq \nabla \Phi_t(x_t)^\top \left(\frac{1}{2}(x_t+v_t)+\frac{\alpha_X}{8}\|v_t-x_t\|^2 u-x_t\right)  \\
	&=\frac{1}{2}\nabla \Phi_t(x_t)^\top (v_t-x_t)+\frac{\alpha_X\|v_t-x_t\|^2}{8} u^\top \nabla \Phi_t(x_t).
\end{align*}
By setting $u$ such that $u^\top \nabla \Phi_t(x_t)= -\|\nabla \Phi_t(x_t)\|_*$,  we obtain
\begin{align*}
	\nabla \Phi_t(x_t)^\top (v_t-x_t) &\leq  \frac{1}{2}\nabla \Phi_t(x_t)^\top (x_{\opt}-x_t)-\frac{\alpha_X\|v_t-x_t\|^2}{8} \|\nabla \Phi_t(x_t)\|_*\\
	&\leq -\frac{1}{2}h_t-\frac{\alpha_X\|v_t-x_t\|^2}{8} \|\nabla \Phi_t(x_t)\|_*
\end{align*}
where the first inequality is by our choice of $v_t$ and the second inequality is from convexity of $\Phi_t(\cdot)$.
Combining with the previous inequality, it holds that
\begin{align*}
	&h_{t+1}+(\sigma_t-\sigma_{t+1})f_{t+1}\\
	&\leq \left(1-\frac{\alpha}{2}\right)h_t+\frac{\|v_t-x_t\|^2}{2}\left((\sigma_tL_f+L_g)\alpha^2-\frac{\alpha_X\|\grad \Phi_t(x_t)\|_*}{4}\alpha
	\right).
\end{align*}
If $\alpha_X\|\nabla \Phi_t(x_t)\|_* \geq 4(\sigma_tL_f+L_g)$, then by setting $\alpha = 1$, we have $$h_{t+1}+(\sigma_t-\sigma_{t+1})f_{t+1} \leq \frac{h_t}{2}.$$ 
Otherwise, we substitute $\alpha = \alpha_X\|\nabla \Phi_t(x_t)\|_*/ 4(\sigma_tL_f+L_g)$ and obtain
\begin{align*}
	h_{t+1}+(\sigma_t-\sigma_{t+1})f_{t+1} &\leq h_t\left(1- \frac{\alpha_X\|\nabla \Phi_t(x_t)\|_*}{8(\sigma_tL_f+L_g)}\right)\leq h_t\left(1- \frac{\alpha_X\|\nabla \Phi_t(x_t)\|_*}{8(\sigma_0L_f+L_g)}\right).
\end{align*}
In both cases, we have
\begin{align*}
	h_{t+1}+(\sigma_t-\sigma_{t+1})f_{t+1} \leq h_t \cdot\max\left\{\frac{1}{2},\ 1- \frac{\alpha_X\|\nabla \Phi_t(x_t)\|_*}{8(\sigma_0L_f+L_g)}\right\},
\end{align*}
For each $t \geq 0$, we define $w_t := \argmin_{x \in X} \Phi_t(x)$ and thus by the optimality condition $\grad \Phi_t(w_t)^\top(x_t-w_t) \geq 0$. \cref{ass:strongly-convex-f} then gives
\[ \Phi_t(x_t)-\Phi_t(w_t) \geq \grad \Phi_t(w_t)^\top(x_t-w_t)+ \frac{\alpha_f \sigma_t}{2}\|x_t-w_t\|^2 \geq \frac{\alpha_f \sigma_t}{2}\|x_t-w_t\|^2. \]
Thus, we obtain
\begin{align*}
	\left(\frac{2}{\alpha_f \sigma_t}\left(\Phi_t(x_t)-\Phi_t(w_t)\right)\right)^{1/2} \|\nabla \Phi_t(x_t)\|_* 
	&\geq \|x_t-w_t\| \|\nabla \Phi_t(x_t)\|_*\\
	&\geq \grad \Phi_t(x_t)^\top (x_t-w_t) \\
	&\geq \Phi_t(x_t)-\Phi_t(w_t)\\
	\implies \|\nabla \Phi_t(x)\|_* \geq \left(\frac{\alpha_f \sigma_t}{2}\left(\Phi_t(x_t)-\Phi_t(w_t)\right)\right)^{1/2} &\geq \left(\frac{\alpha_f \sigma_t}{2}[h_t]_+\right)^{1/2}
\end{align*}
where the last inequality is a result of $\Phi_t(x_t)-\Phi_t(w_t) \geq \Phi_t(x_t)-\Phi_t(x_{\opt})$ and $\Phi_t(x_t)-\Phi_t(w_t) \geq 0$.

Thus, we obtain
$$h_{t+1}+(\sigma_t-\sigma_{t+1}) f_{t+1} \leq h_t \cdot\max\left\{\frac{1}{2},\ 1- \frac{\alpha_X}{8(\sigma_0L_f+L_g)}\left(\frac{\alpha_f \sigma_t}{2}[h_t]_+\right)^{1/2}\right\}.$$
Let us define $b_t := -(\sigma_t - \sigma_{t+1}) f_{t+1}$. We thus have
\[ h_{t+1} \leq h_t \cdot \max\left\{\frac{1}{2},\ 1- M \sqrt{ \sigma_t [h_t]_+} \right\} + b_t. \]

We now seek to apply \cref{lem:h-recursion} so that $h_t \leq H/(t+2)^k$ for $t \geq 1$, assuming the existence of $H,k$ satisfying the conditions of the lemma. We show that $k=\min\{p+2/3,p/3+1,2-p\}$ and $H$ as defined above will satisfy the conditions. Furthermore, we will argue that $H$ is finite. Combining the bounds in \cref{lem:fg-bounds-accelerated,thm:accelerated-quadratic-growth} gives $f_{t+1} \geq - \sqrt{\frac{G_f W}{\kappa(t+2)^{\min\{1,2p\}}}}$.
As $\sigma_t-\sigma_{t+1}= \varsigma((t+1)^{-p} - (t+2)^{-p})$, we have
\begin{align*}
	b_t &= -(\sigma_t - \sigma_{t+1}) f_{t+1} \leq \sqrt{\frac{G_f W \varsigma^2}{\kappa}} \frac{p(t+1)^{-p-1}}{(t+2)^{\min\{p,1/2\}}}\\
	&\leq \sqrt{\frac{G_f W \varsigma^2}{4\kappa}} \frac{p2^{p+1}(t+2)^{-p-1}}{(t+2)^{\min\{p,1/2\}}} \leq \sqrt{\frac{G_f W \varsigma^2}{\kappa}} \frac{p2^{p+1}}{(t+2)^{\min\{2p+1,p+3/2\}}}.
\end{align*}
Using this bound on $b_t$, the two conditions required for $H,k$ in \cref{lem:h-recursion} are:
\begin{align*}
	\frac{H}{2(t+2)^k} + \sqrt{\frac{G_f W \varsigma^2}{\kappa}} \frac{p2^{p+1}}{(t+2)^{\min\{2p+1,p+3/2\}}} &\leq \frac{H}{(t+3)^k},\\
	\frac{H k}{(t+2)^{k+1}} + \sqrt{\frac{G_f W \varsigma^2}{\kappa}} \frac{p2^{p+1}}{(t+2)^{\min\{2p+1,p+3/2\}}} &\leq \frac{MH^{3/2} \sqrt{\sigma_t}}{\sqrt{2} (t+2)^{3k/2}}.
\end{align*}
We choose $k=\min\{p+2/3,p/3+1,2-p\}$, which means that the conditions become
\begin{align*}
	\frac{H}{2(t+2)^k} + \sqrt{\frac{G_f W \varsigma^2}{\kappa}} \frac{p2^{p+1}}{(t+2)^{\min\{2p+1,p+3/2\}}} &\leq \frac{H}{(t+3)^k},\\
	\frac{H k}{(t+2)^{k+1}} + \sqrt{\frac{G_f W \varsigma^2}{\kappa}} \frac{p2^{p+1}}{(t+2)^{\min\{2p+1,p+3/2\}}} &\leq \frac{MH^{3/2} \sqrt{\sigma_t}}{\sqrt{2} (t+2)^{3k/2+p/4}}.
\end{align*}
We note that the value we chose for $k$ satisfies
$$k< 3k/2+p/2 \leq \min\{2p+1,p+3/2,k+1\}.$$
Since the left hand sides decrease at least as fast as the right hand sides, we can choose $H$ sufficiently large so that these inequalities will be satisfied for all $t \geq 1$, thus the conditions of \cref{lem:h-recursion} are satisfied and we deduce that $h_t \leq H/(t+2)^{k}$.

Finally, using the fact that $\sigma_t = \varsigma (t+1)^{-p}$, \cref{lem:fg-bounds-accelerated} now gives $f_t \leq h_t/\sigma_t = (H/\varsigma)/(t+2)^{\min\{2/3,1-2p/3,2-2p\}}$.
\end{proof}

\begin{remark}\label{rem:p-choice-accelerated}
For both \cref{thm:accelerated-quadratic-growth,thm:accelerated-strongly-convex}, the optimal choice of the regularization parameter is $p=1/2$. To see this, notice that the rate for the inner-level is $O\left(t^{-\min\{2p,1\}}\right)$ in both theorems, and that $\min\{2p,1\} \leq 1$, with equality when $p=1/2$. For \cref{thm:accelerated-quadratic-growth}, the rate for the outer-level is $O\left(t^{-\min\{p,1-p\}}\right)$, and $\min\{p,1-p\} \leq 1/2$, with equality when $p=1/2$. For \cref{thm:accelerated-strongly-convex}, the rate for the outer-level is $O\left(t^{-\min\{p,1-p\}}\right)$, and $\min\{2/3,1-2p/3,2-2p\} \leq 2/3$, with equality when $p=1/2$. In summary, for both theorems, the fastest rates achievable for both inner- and outer-level optimality gaps occur when $p=1/2$.
\epr
\end{remark}

	\section{Numerical study} \label{sec:experiments}

We perform numerical experiments on matrix completion \citep{LuEtAl2021}, which seeks to find a low-rank $n \times p$ matrix $X$ to approximate a subset of observed entries $M_{i,j}$, for $(i,j) \in \Omega \subset [n] \times [p]$. This is done by solving the following optimization problem:
\begin{equation} \label{single level matrix completion}
\begin{aligned}
\min_{X \in \mathbb{R}^{n \times p}} \quad g(X):=\frac{1}{2} \sum_{(i,j) \in \Omega}(X_{i,j}-M_{i,j})^2\quad
\text{s.t.} \quad \|X\|_* \leq \delta,
\end{aligned}
\end{equation}
where $\|\cdot\|_*$ is the nuclear norm and $\delta$ is a positive constant. The objective of \eqref{single level matrix completion} is not strictly convex, so it is possible to have multiple minimizers. %
One possible criterion to select between different minima is to choose one with the lowest variance within columns, leading us to the following outer-level objective:
\begin{equation} \label{eq:sum-of-squares outer}
    f(X)=\frac{1}{2} \sum_{j \in [p]} \sum_{i \in [n]}\left(X_{i,j}-\overline{X}_j \right)^2\quad\text{where}\quad \overline{X}_j := \frac{1}{n}\sum_{i \in [n]}X_{i,j}, \ \forall j \in [p].
\end{equation}
Since $\left(\overline{X}_1,\dots, \overline{X}_p\right) = X^\top \mathbf{1}_{n}/n$, 
we can rewrite $f$ as
\begin{equation} \label{eq:outer-objective}
    f(X) = \frac{1}{2}\left\|U X \right\|_F^2\quad\text{where}\quad U:=\mathbf{I}_n-\frac{1}{n}\mathbf{1}_{n}\mathbf{1}_{n}^\top
\end{equation}
and $\|\cdot\|_F$ is the Frobenius norm.
To this end, we solve the following bilevel problem:
\begin{equation} \label{bilevel level matrix completion}
\begin{aligned}
\min_{X \in \mathbb{R}^{n \times p}} \quad  f(X)\quad
\text{s.t.} \quad  X \in \argmin_{\|Z\|_* \leq \delta} g(Z),
\end{aligned}
\end{equation}
From \eqref{eq:sum-of-squares outer}, $f$ is a convex quadratic function in terms of $X$. Furthermore, since $U = U^\top U$, $U$ is positive semi-definite and has the largest eigenvalue of $1$. Thus, the smoothness constants for $f$ and $g$ are $L_f = L_g =1$.

\subsection{Data description}
We use the MovieLens 1M data set \citep{grouplens}. This data set contains ratings of $3952$ movies from $6040$ users, made on a 5-star scale. Therefore, $n=6040$, $p=3952$, and each $M_{i,j} \in [5]$ for $(i,j) \in \Omega$. In this context, the objective $f$ above looks for matrices $X$ where ratings of particular movies across users have low variance. In the dataset, we have $|\Omega| = 1,000,209$ observed entries, which is $\approx 4.19\%$ of total possible entries. In our experiments, we set the nuclear norm radius to be $\delta = 5$. 

\subsection{Algorithms}
We implemented our \ircg method given by \cref{alg:IR-CG} to solve \eqref{bilevel level matrix completion}. For \ircg, we implemented the three step size selection schemes as described in \cref{eq:openloop,eq:exactlinesearch,eq:inexactlinesearch}. For regularization parameters, we select $\sigma_t = 0.05(t+1)^{-1/2}$ for each $t \geq 0$ with $p=1/2$, which ensures inner- and outer-level objectives converge at rate $O(t^{-1/2})$. We implemented the line search \cref{eq:exactlinesearch} via the bounded Brent method from the \texttt{scipy.optimize.minimize-scalar} package (version 1.11.3).

Since $f(X+\alpha \mathbf{1}_{n}\mathbf{1}_{p}^\top) = f(X)$ for any $X \in \bbR^{n \times p}$ and $\alpha \in \mathbb{R}$, the sublevel sets of $f$ are not compact. Hence, a critical assumption %
for the ITALEX method \citep{DoronShtern2022} is violated. As a result, for performance comparison, we only implemented \cgbio \citep{JiangEtAl2023}, \irpg \citep{Solodov2006}, and \bisg\citep{MerchavSabach2023}. We chose the parameters for the implementation of these algorithms based on the
criteria described in the corresponding papers. Specifically, for \cgbio, we set the step sizes to be $\alpha_t = 2/(t+2)$ for $t \geq 0$ and $\epsilon_g = 10^{-4}$. Following the notation in the original papers: for \irpg, we set $\theta = \Tilde{\alpha} = \eta = 1/2$ and regularization parameters $\sigma_t = 0.05(t+1)^{-1/2}$ for each $t \geq 0$; for \bisg, we set $\alpha = 1/(2-0.01)$ (to get the convergence rates of both inner- and outer-level objectives close to $O\left(t^{-1/2}\right)$), and $c=\min\{1/L_f,1\}=1$.

The starting points for all algorithms except for \cgbio were set to be the following matrix:
$$X_0 := 0.01 \times \delta \begin{bmatrix}
            \mathbf{I}_p/p & \mathbf{0}_{p\times (n-p)}
        \end{bmatrix}^\top.$$
For \cgbio, we required an initial point $X'_0$ that satisfies $g(X'_0)-g_{\opt} \leq \epsilon_g/2$. To generate this point, we used a single-level conditional gradient algorithm for the inner-level objective with step sizes $\alpha_t = 2/(t+2)$ for $t \geq 0$ until the surrogate gap was $\leq \epsilon_g/2$. We initialized this phase with $X_0$ given above.

For \ircg, we had to solve linear minimization sub-problems over the nuclear norm ball, whose solution is discussed in Appendix \ref{subsec:NNB}. For \cgbio, we tackled a linear oracle over the nuclear norm ball intersecting with a half-space, and the corresponding solution is provided in Appendix~\ref{subsec:SNNB}. To compute the projection onto the feasible set required for \irpg and \bisg, we followed the steps given in Appendix~\ref{subsec:projectionNNB}.

To approximate the inner-level optimal value $g_{\opt}$, we implemented conditional gradient starting from $X_0$ with step sizes $\alpha_t = 2/(t+2)$ for $t \geq 0$ to retrieve a $10^{-5}$-sub-optimal solution using duality surrogate gap as the stopping criterion. Then we used this sub-optimal solution as a starting point for the implementation of another conditional gradient run to obtain a $10^{-12}$-sub-optimal solution and $g_{\opt}$ was approximated by the corresponding inner-level objective value. We found that this warm-up approximation scheme saved time significantly compared to running the algorithm only once.

We set a time limit of $10$ minutes ($600$ seconds) for all algorithms. All experiments were run on a server with a 2.4GHz processor and 32 GB memory, using Python 3.10.9.

\begin{figure}[t]
\centering

\includegraphics{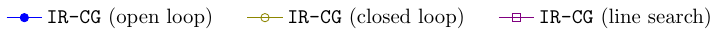}
\includegraphics{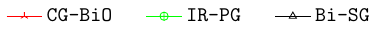}
\includegraphics{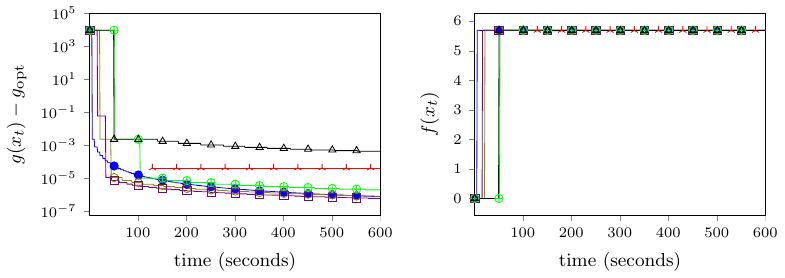}

\caption{Plot of the best inner-level objective value found by each algorithm (left) and the corresponding outer-level objective value (right) on the low-rank matrix completion instance, at each point in time. 
Note that the $y$-axis is in logarithmic scale on the left figure. In addition, for all versions of \cref{alg:IR-CG}, we report the values based on the last iterate $x_t$, which converges according to \cref{theorem:asymptotic}.
}
\label{fig:plot2}
\end{figure}

\begin{table}[h!t]
\centering
\begin{tabular}{||c || c||}
\hline
 Method & Number of iterations executed\\
 \hline
 \hline
 \ircg \ (open loop)  & 110  \\
 \ircg \ (closed loop)  & 52  \\
 \ircg \ (line search) & 33\\
 \cgbio & 2\\
 \irpg   & 12\\
 \bisg & 13\\
 \hline
\end{tabular}
\caption{Comparison of the number of iterations executed within $10$ minutes.}
\label{table:1}
\end{table}

\subsection{Results comparison}

\cref{fig:plot2} illustrates the values of the inner-level optimality gap (on the left) and outer objective (on the right) generated by \ircg, \cgbio, \irpg, \bisg within $10$ minutes. Regarding the inner-level optimality gap, we observe that the three versions of \ircg perform comparably best, followed by \irpg. According to \cref{table:1}, \irpg ran only $12$ iterations within the time limit while \ircg (open loop) ran 110 iterations. 
\cgbio barely makes any improvement compared to the initialized point $X'_0$ since it can only perform $2$ iterations due to the complicated structure of the linear minimization oracle. Although \bisg is known to have a theoretical convergence rate of $O(t^{-1/(2-0.01)})$ for the inner-level objective in this particular problem class, %
it shows an inferior performance %
compared to \ircg. \cref{fig:plot2} also highlights that the outer-level objective values of these algorithms are directly correlated to the inner-level optimality gaps.

	\section{Conclusion} \label{sec:conclusion}

In this paper, we provide a projection-free algorithm for %
convex bilevel optimization 
via the iterative regularization approach. Through this approach, we obtain rates of convergence in terms of both inner- and outer-level objective values simultaneously, as well as accelerated rates under additional assumptions.

One possible future research direction is to
explore projection-free bilevel methods that mitigate the well-known zig-zag behavior of objective values under conditional gradient-type methods, which often results in slow local convergence.
Another direction is to consider stochastic algorithms, which may be useful in large-scale problems in which exact gradient computation may be expensive.

        \backmatter
        
        \bmhead{Acknowledgements}
        
        The authors would like to thank the review team for feedback and suggestions which significantly improved the paper.

	\begin{appendices}

\section{Implementation details}\label{sec:appendix-implementation}

\subsection{Linear minimization over a nuclear norm ball.} \label{subsec:NNB}
We consider the linear sub-problem for implementing \ircg. %
The corresponding linear sub-problem is
\begin{equation} \label{MC:linearsub1st}
\begin{aligned}
\min_{V} \quad \Trace(C^\top V)\quad
\text{s.t.} \quad \|V\|_* \leq \delta.
\end{aligned}
\end{equation}
Let $u^{(1)}, v^{(1)}$ be the left and right leading singular vectors of $C$. Then \citet[Section 4.2]{Jaggi2013} suggests that the solution of \eqref{MC:linearsub1st} is 
$V^*:= -\delta u^{(1)}(v^{(1)})^\top.$
To compute this solution, we compute the leading eigenvector $v^{(1)}$ with length $1$ and the largest eigenvalue $\sigma_{\max}^2(C)$ of $C^\top C$ with the Lanczos process \citep[Section 10.1]{GolubEtAl2013} via package \texttt{scipy.linalg.eigh} (version 1.11.3). Vector $u^{(1)}$ is computed as $Cv^{(1)}/\sigma_{\max}(C)$.

\subsection{Linear minimization over a sliced nuclear norm ball.} \label{subsec:SNNB}
For \cgbio \citep{JiangEtAl2023}, the corresponding linear sub-problem is as follows:
\begin{equation} \label{MC:linearsub2nd}
\min_{V} \quad \Trace(C^\top V)\quad\text{s.t.} \quad \|V\|_* \leq \delta,\ \Trace(A^\top V) \leq b.
\end{equation}
Since the size of $V$ is large, it is impractical to use off-the-shelf conic optimisation solvers for \eqref{MC:linearsub2nd}. Therefore we provide an efficient custom algorithm. We note that we will consider the case when \eqref{MC:linearsub2nd} is feasible, since they are generated from outer approximations of $X_{\opt}$, which we assume to be non-empty. This implies that $b \geq \min_{\|V\|_* \leq \delta} \{\Trace(A^\top V)\} = -\delta\sigma_{\max}(A)$.
Before continuing, we have the following observation.
\begin{lemma} \label{lemma:Slater}
If 
    $b >  -\delta\sigma_{\max}(A),$
    then Slater's condition holds for problem \eqref{MC:linearsub2nd}. If $A \neq 0$, then the reverse is true.
\end{lemma}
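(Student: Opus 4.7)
The plan is to recast Slater's condition in the concrete form needed here, then prove the two directions separately using the variational characterization of the spectral (operator) norm as dual to the nuclear norm.

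First I would spell out what Slater's condition means for \eqref{MC:linearsub2nd}. Since the linear inequality $\Trace(A^\top V) \leq b$ is affine, the usual refined Slater condition only requires a strictly feasible point with respect to the nonlinear constraint. Concretely, we need $V_0$ with $\|V_0\|_* < \delta$ and $\Trace(A^\top V_0) \leq b$.

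For the forward direction ($b > -\delta\sigma_{\max}(A) \Rightarrow$ Slater), I would build $V_0$ explicitly. If $A=0$, then the hypothesis forces $b>0$, so $V_0=0$ works. Otherwise, let $u^{(1)}, v^{(1)}$ be unit leading singular vectors of $A$, and set $V_0 := -\gamma u^{(1)}(v^{(1)})^\top$ for $\gamma \in [0,\delta)$. Then $\|V_0\|_* = \gamma < \delta$ and $\Trace(A^\top V_0) = -\gamma\sigma_{\max}(A)$. Choosing any $\gamma \in [\max\{0,-b/\sigma_{\max}(A)\},\delta)$, which is nonempty precisely because $-b/\sigma_{\max}(A) < \delta$, gives $\Trace(A^\top V_0) \leq b$.

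For the reverse direction under $A\neq 0$, I would use the fact that the operator norm is dual to the nuclear norm, so $|\Trace(A^\top V)| \leq \|A\|_{\mathrm{op}}\|V\|_* = \sigma_{\max}(A)\|V\|_*$ for all $V$. If Slater holds with witness $V_0$, then
\[
b \;\geq\; \Trace(A^\top V_0) \;\geq\; -\sigma_{\max}(A)\|V_0\|_* \;>\; -\delta\sigma_{\max}(A),
\]
where the strict inequality comes from $\|V_0\|_* < \delta$ combined with $\sigma_{\max}(A)>0$ (which uses $A\neq 0$). This establishes the converse and completes the proof. I do not anticipate a serious obstacle; the only subtlety is being careful that the Slater refinement for affine constraints is used so that the nontrivial strict inequality is the nuclear-norm one.
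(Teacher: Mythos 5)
Your proof is correct. The reverse direction is essentially identical to the paper's: both apply the duality between the nuclear and spectral norms to get $b \geq \Trace(A^\top V_0) \geq -\sigma_{\max}(A)\|V_0\|_* > -\delta\sigma_{\max}(A)$, with strictness coming from $\|V_0\|_* < \delta$ and $A \neq 0$. The forward direction is where you diverge: the paper takes a minimizer $V^*$ of $\Trace(A^\top V)$ on the boundary of the ball and argues by continuity that a small perturbation of $V^*$ can be taken into the open ball while keeping $\Trace(A^\top U) < b$, whereas you construct the strictly feasible point explicitly as a scaled rank-one matrix $-\gamma\, u^{(1)}(v^{(1)})^\top$ built from the leading singular vectors of $A$, choosing $\gamma \in [\max\{0,-b/\sigma_{\max}(A)\},\delta)$, with the degenerate case $A=0$ handled separately. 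Your construction is more concrete and self-verifying (the computation $\Trace(A^\top V_0) = -\gamma\sigma_{\max}(A)$ and $\|V_0\|_* = \gamma$ is immediate), at the cost of a case split; the paper's perturbation argument is shorter and case-free but leans on a topological observation about boundary points of the ball. One cosmetic difference: you invoke the refined Slater condition so your witness only needs $\Trace(A^\top V_0) \leq b$, while the paper's forward direction actually produces a point strictly feasible in both constraints; since the paper's reverse direction also only assumes weak feasibility of the affine constraint, the two readings of ``Slater'' are consistent and your version suffices for strong duality.
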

\begin{proof}[Proof of \cref{lemma:Slater}]
If 
    $b >  -\delta\sigma_{\max}(A) = \min_{\|V\|_* \leq \delta} \{\Trace(A^\top V)\},$
    we let $V^*$ be a minimizer  of $\Trace(A^\top V)$ over $\|V\|_* \leq \delta$  such that $\|V^*\|_* = \delta$ and observe that there exists a sufficiently small $\epsilon>0$ such that for any $U$ such that $\|U-V^*\|_* < \epsilon$, $\Trace(A^\top U) < b$ by the continuity of linear function. Since $V^*$ is on the boundary of the nuclear norm ball, there must exist $U$ that satisfies $\|U-V^*\|_* < \epsilon$ and is in the interior of the nuclear norm ball, i.e., $\|U\|_* < \delta$. Thus, Slater's condition holds for problem \eqref{MC:linearsub2nd}.
    Now, we assume $A \neq 0$. If Slater's condition holds for problem \eqref{MC:linearsub2nd}, there exists $U$ such that
    $\|U\|_* < \delta$ and $\Trace(A^\top U) \leq b.$
     Since $A \neq 0$ and $\|U\|_*<\delta$, we have that
    $b \geq \Trace(A^\top U) \geq -\sigma_{\max}(A)\|U\|_* > -\delta\sigma_{\max}(A)$,
    as required.
\end{proof}

First, we consider the case in which Slater's condition does not hold. In this case, by \cref{lemma:Slater}, we have $\min_{\|V\|_* \leq \delta}\{\Trace(A^\top V)\} = -\delta \sigma_{\max}(A) = b,$ and
$$\{V \in \mathbb{R}^{n \times p} \mid \Trace(A^T V) \geq b, \|V\|_* \leq \delta\} = \argmin_{\|U\|_* \leq \delta} \left\{\Trace(A^\top U)\right\}.$$
Therefore, a solution to \cref{MC:linearsub2nd} in this case is
$$V^* \in \argmin_{V} \left\{\Trace(C^\top V) \;\middle|\; V \in \argmin_{\|U\|_* \leq \delta} \left\{\Trace(A^\top U)\right\}\right\}.$$
When Slater's condition holds for problem \eqref{MC:linearsub2nd}, we consider the Lagrangian
$$L(V,\lambda, \mu) = \Trace((C+\lambda A)^\top V)-\lambda b + \mu\left(\|V\|_* - \delta\right), \quad V \in \mathbb{R}^{n \times p}, \lambda, \mu \geq 0,$$
then the dual function is
$$\mathcal{D}(\lambda, \mu) = \inf_{V \in \mathbb{R}^{n \times p}}L(V,\lambda, \mu) = \begin{cases}-b\lambda-\delta \mu, & \sigma_{\max}(C+\lambda A) \leq \mu\\ \ -\infty, & \text{otherwise}. \end{cases}$$
Thus, the dual problem is
\begin{equation}\label{1stdualprob}
    \begin{aligned}
        \max_{\lambda, \mu \geq 0} \quad -b\lambda-\delta \mu\quad
        \text{s.t.} \quad \sigma_{\max}(C+\lambda A) \leq \mu,
    \end{aligned}
\end{equation}
which is equivalent to
\begin{equation}\label{2nddualprob}
    \min_{\lambda \geq 0} \quad \delta \sigma_{\max}(C+\lambda A) + b\lambda.
\end{equation}
We also note that in case Slater's condition holds, we have $b>-\delta\sigma_{\max}(A)$ if $A \neq 0$ by \cref{lemma:Slater}. At any optimal solution $\lambda^*$ to problem \eqref{2nddualprob}, the objective must not be greater than that at $\lambda = 0$. Therefore, we can obtain an upper bound on $\lambda^*$ as follows:
$$ \delta \sigma_{\max}(C) \geq \delta \sigma_{\max}(C+\lambda^* A) + b\lambda^*
                   \geq \delta \left(\sigma_{\max}(\lambda^* A)-\sigma_{\max}(-C)\right)+ b\lambda^*$$
where we use the triangle inequality for the spectral norm in the second inequality. This is equivalent to ${2\delta \sigma_{\max}(C)}/({b+\delta\sigma_{\max}(A)}) \geq \lambda^*$.

 Given an optimal dual variable $\lambda^*$ by solving \eqref{2nddualprob}, the optimal solution to problem \eqref{MC:linearsub2nd} is also the solution of minimizing $\Trace((C+\lambda^* A)^\top V)$ over the nuclear ball. If $A = 0$, we observe that $\lambda^* = 0$ minimizes problem \eqref{2nddualprob} since $b \geq -\delta \sigma_{\max}(A) = 0$. 
\begin{remark} \label{remark:optimal-dual}
To solve problem \eqref{2nddualprob}, if $A \neq 0$, we compute $\lambda^*$ by conducting line search on function $\delta \sigma_{\max}(C+\lambda A) + b\lambda$ over the interval 
$\left[0,{2\delta \sigma_{\max}(C)}/({b+\delta\sigma_{\max}(A)})\right]$, 
with the bounded Brent method via package \texttt{scipy.optimize.minimize-scalar} (version 1.11.3).
If $A=0$, we set $\lambda^*:=0$. 
    \epr
\end{remark}
Given $\lambda^*$, we need to ensure the solution we get from minimizing $\Trace((C+\lambda^* A)^\top V)$ over the nuclear ball satisfies the linear inequality constraint. Hence, we can compute such solution as follows:
$$V^*\in \argmin_{V} \left\{\Trace(A^\top V) \;\middle|\; V \in \argmin_{\|U\|_* \leq \delta} \left\{\Trace((C+\lambda^* A)^\top U)\right\}\right\}.$$
Therefore, both cases require us to solve bilevel linear problems over the nuclear norm ball. To do this, we need the following results.

\begin{lemma} \label{lemma:argminNB}
Given a matrix $P \in \mathbb{R}^{n \times p}$, let $E_1$ be the eigenspace associated with the leading eigenvalue of matrix $P^\top P$ and
$$\mathcal{E}_1:= \left\{ -\frac{\delta}{\sigma_{\max}(P)} P v v^\top \;\middle|\; v \in E_1, \ \|v\|_2=1\right\}.$$
Then 
$$\argmin_{\|Z\|_*\leq \delta} \Trace(P^\top Z) =  \Conv(\mathcal{E}_1).$$
\end{lemma}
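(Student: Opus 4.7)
The plan is to first establish that the minimum value of the linear objective equals $-\delta\sigma_{\max}(P)$ and then characterize the set of minimizers. The main tool is the duality between the nuclear norm and the spectral (operator) norm: for any $Z \in \bbR^{n\times p}$,
\[ \Trace(P^\top Z) \geq -\sigma_{\max}(P) \|Z\|_* \geq -\delta \sigma_{\max}(P), \]
which provides a uniform lower bound on the objective over the feasible region.

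The inclusion $\Conv(\mathcal{E}_1) \subseteq \argmin_{\|Z\|_*\leq \delta} \Trace(P^\top Z)$ is the easy direction. For any unit $v \in E_1$, the matrix $Z_v := -\frac{\delta}{\sigma_{\max}(P)}P v v^\top$ has rank one with singular value $\frac{\delta}{\sigma_{\max}(P)}\|Pv\|_2 = \delta$, so $\|Z_v\|_* = \delta$, and
\[ \Trace(P^\top Z_v) = -\frac{\delta}{\sigma_{\max}(P)} v^\top P^\top P v = -\frac{\delta\, \sigma_{\max}^2(P)}{\sigma_{\max}(P)} = -\delta\sigma_{\max}(P). \]
Since the feasible set is convex and the objective is linear, the whole convex hull of $\mathcal{E}_1$ also attains the minimum.

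The reverse inclusion is the main technical step. Given any minimizer $Z^*$, I would consider a singular value decomposition $Z^* = \sum_i s_i u_i v_i^\top$ with $s_i > 0$ and orthonormal $\{u_i\}, \{v_i\}$, and expand
\[ -\delta \sigma_{\max}(P) = \Trace(P^\top Z^*) = \sum_i s_i\, u_i^\top P v_i \geq -\sigma_{\max}(P)\sum_i s_i \geq -\delta\sigma_{\max}(P). \]
Equality must then hold throughout, which forces $\sum_i s_i = \|Z^*\|_* = \delta$ and, for each $i$ with $s_i > 0$, $u_i^\top P v_i = -\sigma_{\max}(P)$. The Cauchy--Schwarz chain $|u_i^\top P v_i| \leq \|u_i\|_2 \|P v_i\|_2 \leq \sigma_{\max}(P)$ can be saturated only if $\|Pv_i\|_2 = \sigma_{\max}(P)$ and $u_i = -Pv_i/\sigma_{\max}(P)$; the first condition forces $v_i \in E_1$. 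Substituting back,
\[ Z^* = \sum_i s_i u_i v_i^\top = \sum_i \frac{s_i}{\delta} \left( -\frac{\delta}{\sigma_{\max}(P)} P v_i v_i^\top \right), \]
which exhibits $Z^*$ as a convex combination of elements of $\mathcal{E}_1$ since the weights $s_i/\delta$ are nonnegative and sum to one.

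The only delicate point is the tightness analysis of the Cauchy--Schwarz/duality inequality; once one observes that equality in $\|Pv_i\|_2 \leq \sigma_{\max}(P)$ for a unit vector $v_i$ characterizes membership in the leading eigenspace of $P^\top P$, the rest follows routinely. I expect the handling of multiplicity in $\sigma_{\max}(P)$ to be the only subtle aspect, and it is precisely accommodated by allowing arbitrary unit $v \in E_1$ in the definition of $\mathcal{E}_1$.
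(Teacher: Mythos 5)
Your proposal is correct and follows essentially the same route as the paper's proof: the easy inclusion is verified by computing the nuclear norm and objective value of the rank-one matrices $-\frac{\delta}{\sigma_{\max}(P)}Pvv^\top$, and the reverse inclusion is obtained by expanding a minimizer's SVD and extracting the equality conditions in the Cauchy--Schwarz/spectral-norm chain, which force each $v_i$ with $s_i>0$ into $E_1$ and $u_i=-Pv_i/\sigma_{\max}(P)$. The only cosmetic difference is that you state the nuclear/spectral duality bound upfront, whereas the paper derives it inline.
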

\begin{proof}[Proof of \cref{lemma:argminNB}]
Note that the optimal value is $-\delta \sigma_{\max}(P)$. First, we prove that $\argmin_{\|Z\|_*\leq \delta} \Trace(P^\top Z) \supseteq  \Conv(\mathcal{E}_1)$ holds.
Given $X \in \mathcal{E}_1$, there exists $v \in E_1$ with $\|v\|_2=1$ such that
$X=-(\delta/\sigma_{\max}(P)) P v v^\top.$
Observe that $X^\top X = ({\delta^2}/{\sigma^2_{\max}(P)}) vv^\top P^\top Pvv^\top = \delta^2 vv^\top$. Note that there is only one non-zero eigenvalue $\delta^2$ of $X^\top X$ with eigenvector $v$. Any vector that is orthogonal to $v$ has eigenvalue $0$. Therefore $(X^\top X)^{1/2} = \delta vv^\top$, and $\|X\|_* = \Tr((X^\top X)^{1/2}) = \delta$, thus $X$ is feasible.
This implies that any point in $\Conv(\mathcal{E}_1)$ is also feasible. We have that
$$\Trace(P^\top X) = -\frac{\delta}{\sigma_{\max}(P)} \Trace(v^\top P^\top Pv)=-\delta \sigma_{\max}(P),$$
thus $X$ is optimal, which implies $\mathcal{E}_1 \subseteq \argmin_{\|Z\|_*\leq \delta} \Trace(P^\top Z)$. Taking convex hull for both sets, we obtain the required result.

Now, we prove $\argmin_{\|Z\|_*\leq \delta} \Trace(P^\top Z) \subseteq  \Conv(\mathcal{E}_1)$ holds. Given a feasible $X$, let a singular value decomposition of $X$ be $X = \sum_{i\in [\min\{n,p\}]} \sigma_i u_i v_i^\top$,
where $\sigma_1 \geq \dots \geq \sigma_{\min\{n,p\}} \geq 0$. Since $\|X\|_* \leq \delta$, we have
$\sum_{i\in [\min\{n,p\}]} \sigma_i \leq \delta$.

By using the Cauchy-Schwarz inequality and the fact that $\{u_i\}_{i}$ and $\{v_i\}_{i}$ are two sets orthonormal vectors, we have that
$$ \Trace(P^\top X)=\sum_{i\in [\min\{n,p\}]}\sigma_i v_i^\top P^\top u_i\geq \sum_{i\in [\min\{n,p\}]}\sigma_i\left( -\|u_i\|_2 \|P v_i\|_2\right).$$
Here, the inequality becomes equality if and only if given $i \in [\min\{n,p\}]$, we have that $\sigma_i = 0$ or $Pv_i= k_iu_i$ for some $k_i \leq 0$. Since $\|u_i\|_2=1$,
$$ \Trace(P^\top X)\geq -\sum_{i\in [\min\{n,p\}]} \sigma_i \|Pv_i\|_2\geq - \sigma_{\max}(P) \sum_{i\in [\min\{n,p\}]} \sigma_i\geq - \delta \sigma_{\max}(P),$$
where the second inequality becomes equality if and only if given $i \in [\min\{n,p\}]$, $\sigma_i =0$ or $v_i$ is a leading eigenvector of $P^\top P$. Hence, in order to have 
$X \in \argmin_{\|Z\|_*\leq \delta} \Trace(P^\top Z)$,
given $i \in [\min\{n,p\}]$, we have that $\sigma_i =0$ or $v_i$ is a normalized leading eigenvector of $P^\top P$.
In summary, in order for $X$ to be optimal (i.e., all inequalities above hold with equality), we need $v_i$ to be a leading eigenvector of $P^\top P$, $Pv_i = -\sigma_{\max}(P) u_i$ whenever $\sigma_i \neq 0$, and $\sum_{i \in [\min\{n,p\}]} \sigma_i = \delta$. Furthermore, note that if we define $u_i = -({1}/\sigma_{\max}(P))P v_i$ whenever $v_i \in E_1$, then $u_i^\top u_j = ({1}/{\sigma_{\max}(P)^2}) v_i^\top P^\top P v_j = v_i^\top v_j = 0$ for any other $j \in [\min\{n,p\}]$. On the other hand, $u_i^\top u_i = v_i^\top v_i = 1$. Therefore $\{u_i\}_{i\in [\min\{n,p\}]}$ defined in this way is also an orthonormal set of vectors.

Since $\{v_i\}_{i \in [\min\{n,p\}]}$ is an orthonormal set of vectors in $E_1$, we must have
$\sigma_i = 0, \forall i> \dim(E_1)$. Therefore $X = \sum_{i\in [\dim(E_1)]} \sigma_i u_i v_i^\top = -({1}/{\sigma_{\max}(P)}) \sum_{i\in [\dim(E_1)]} \sigma_i P v_i v_i^\top$,
where $\{v_1,\dots, v_{\dim(E_1)}\}$ is an orthonormal basis of $E_1$ and $\sigma_i \geq 0, \sum_{i\in [\dim(E_1)]}\sigma_i = \delta$. Therefore, $X \in \Conv(\mathcal{E}_1)$ as required.
\end{proof}

\begin{lemma} \label{lemma:bilevelNB}
Given $Q \in \mathbb{R}^{n \times p}$, let $P, E_1$ be defined as in \cref{lemma:argminNB}, $R \in \mathbb{R}^{p \times \dim(E_1)}$ be a matrix whose columns form an orthonormal basis of $E_1$,  $S \in \mathbb{R}^{\dim(E_1) \times \dim(E_1)}$ be a symmetric matrix defined as $S:= R^\top\left(( Q^\top P +P^\top Q)/{2}\right)R$,
and $s_1 \in \mathbb{R}^{d_1}$ be a leading eigenvector of $S$ of length $1$. Then we have that  
$$-\frac{\delta}{\sigma_{\max}(P)}  P (Rs_1) (Rs_1)^\top \in \argmin_{X}\left\{ \Trace(Q^\top X) 
 \;\middle|\; X \in \argmin_{\|Z\|_*\leq \delta} \left\{\Trace(P^\top Z)\right\}\right\}.$$
\end{lemma}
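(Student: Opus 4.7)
The plan is to combine Lemma~\ref{lemma:argminNB} with a standard reduction of a symmetric quadratic form on a subspace to an eigenvalue problem. By Lemma~\ref{lemma:argminNB}, the inner argmin set equals $\Conv(\mathcal{E}_1)$, which is compact (being the convex hull of the continuous image of the unit sphere in $E_1$ under the map $v \mapsto -(\delta/\sigma_{\max}(P))\,P v v^\top$). Since $X \mapsto \Trace(Q^\top X)$ is linear and $\Conv(\mathcal{E}_1)$ is compact, its minimum is attained and coincides with its minimum over $\mathcal{E}_1$. Thus it suffices to solve
\[
\min_{v \in E_1,\ \|v\|_2 = 1} \Trace\!\left(Q^\top \left(-\tfrac{\delta}{\sigma_{\max}(P)} P v v^\top\right)\right).
\]

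Next I would simplify the objective. Using the cyclic property of trace,
\[
\Trace(Q^\top P v v^\top) = v^\top Q^\top P v,
\]
which is a scalar equal to its own transpose, so
\[
v^\top Q^\top P v = \tfrac{1}{2}\bigl(v^\top Q^\top P v + v^\top P^\top Q v\bigr) = v^\top \tfrac{Q^\top P + P^\top Q}{2} v.
\]
Thus the problem becomes
\[
\min_{v \in E_1,\ \|v\|_2 = 1} \ -\tfrac{\delta}{\sigma_{\max}(P)}\, v^\top \tfrac{Q^\top P + P^\top Q}{2}\, v,
\]
or equivalently, since $\delta/\sigma_{\max}(P) > 0$, a maximization of the same quadratic form.

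I would then make the change of variables $v = Rs$ with $s \in \mathbb{R}^{\dim(E_1)}$. Because the columns of $R$ form an orthonormal basis of $E_1$, this bijectively parameterizes the unit sphere of $E_1$ by the unit sphere of $\mathbb{R}^{\dim(E_1)}$ (since $\|Rs\|_2^2 = s^\top R^\top R s = \|s\|_2^2$), and transforms the objective into
\[
\max_{\|s\|_2 = 1} s^\top S s.
\]
By the Rayleigh quotient characterization applied to the symmetric matrix $S$, the maximum is attained by $s = s_1$, a unit-norm eigenvector associated with the largest eigenvalue of $S$. Substituting $v = Rs_1$ back into the parametrization of $\mathcal{E}_1$ yields the claimed minimizer.

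The only subtle point is confirming that restricting to extreme points of $\Conv(\mathcal{E}_1)$ is legitimate; this is standard but relies on compactness of $\Conv(\mathcal{E}_1)$ and linearity of the objective, which I would state explicitly. Beyond this, all steps are routine symmetrization and Rayleigh-quotient arguments, so I do not anticipate any substantial obstacle.
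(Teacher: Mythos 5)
Your proposal is correct and follows essentially the same route as the paper's proof: reduce the outer minimization over $\Conv(\mathcal{E}_1)$ to the generating set $\mathcal{E}_1$ using linearity of the trace objective, rewrite $\Trace(Q^\top X)$ as a quadratic form $-\tfrac{\delta}{\sigma_{\max}(P)}\,v^\top Q^\top P v$ in the unit vector $v \in E_1$, and parameterize $v = Rs$ to obtain the Rayleigh-quotient problem $\max_{\|s\|_2=1} s^\top S s$ solved by $s_1$. The only cosmetic difference is that you make the symmetrization $v^\top Q^\top P v = v^\top \tfrac{Q^\top P + P^\top Q}{2} v$ explicit, which the paper leaves implicit when passing from $R^\top Q^\top P R$ to $S$.
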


\begin{proof}[Proof of \cref{lemma:bilevelNB}]
First, from \cref{lemma:argminNB}, we observe that
$$\argmin_{X}\left\{ \Trace(Q^\top X) 
 \;\middle|\; X \in \argmin_{\|Z\|_*\leq \delta} \left\{\Trace(P^\top Z)\right\}\right\} \supseteq 
 \argmin_{ X \in \mathcal{E}_1}\left\{ \Trace(Q^\top X) \right\}.$$
    Let $X \in \mathcal{E}_1$. Then there exists $v \in E_1$ with  $\|v\|_2=1$ such that 
   $X=-(\delta/\sigma_{\max}(P))P v v^\top.$
Then we have
\begin{align*}
    \Trace(Q^\top X) &= -\frac{\delta}{\sigma_{\max}(P) } v^\top (Q^\top P)v \geq -\frac{\delta}{\sigma_{\max}(P)} \max_{\|u\|_2 = 1, u \in E_1} \{u^\top (Q^\top P)u\}.
\end{align*}
Such lower bound can be obtained when we set $v \in \argmax_{\|u\|_2 = 1, u \in E_1} \{u^\top (Q^\top P)u\}$. Given $\|u\|_2 = 1$ with $u \in E_1$, we have $u = Rs$, where $s \in \mathbb{R}^{\dim(E_1)}$ is a vector of length $1$. Hence, we have
$$\argmax_{\|u\|_2 = 1, u \in E_1} \{u^\top (Q^\top P)u\} = \argmax_{\|s\|_2 = 1}\{s^\top (R^\top Q^\top P R)s\}=\argmax_{\|s\|_2 = 1}\{s^\top S s\}.$$
Thus, we can choose $v = Rs_1$.
\end{proof}

\begin{remark} \label{remark:compute-R}
To compute $R$ as defined in \cref{lemma:bilevelNB}, we used package \texttt{scipy.linalg.eigh} (version 1.11.3) to compute the leading eigenvalue of matrix $P^\top P$ and the associated eigenvectors whose lengths are $1$.
\epr
\end{remark}
Now, we have enough tools to address problem \eqref{MC:linearsub2nd}, which are shown in \cref{alg:NBBLO} and \cref{alg:SNBLO}. 

\begin{algorithm}[htpt]
\caption{Bilevel linear oracle over nuclear norm ball - $\nbblo(P,Q,\delta)$} \label{alg:NBBLO}
\KwData{$P, Q \in \mathbb{R}^{n \times p}, \delta>0$.}
\KwResult{$V^* \in \argmin_{V} \left\{\Trace(Q^\top V) \mid V \in \argmin_{\|U\|_* \leq \delta} \left\{\Trace(P^\top U)\right\}\right\}$.}
 Compute 
        \vspace{-5pt}
        \begin{align*}
            R & \quad \textnormal{as outlined in \cref{remark:compute-R}}, \qquad & S &:= \frac{1}{2}R^\top(Q^\top P +P^\top Q)R &&&&\\
            s_1 & \in \argmax_{\|s\|_2 = 1} \{s^\top S s\}, \qquad & V^* &:= \frac{-\delta}{\sigma_{\max}(P)}P (Rs_1) (Rs_1)^\top.&&&&
        \end{align*}
        \vspace{-5pt}
\end{algorithm}

\begin{algorithm}[htpt]
\caption{Linear oracle over nuclear sliced norm ball - \snblo} \label{alg:SNBLO}
\KwData{$C \in \mathbb{R}^{n \times p}, A \in \mathbb{R}^{n \times p}, b \in \mathbb{R}, \delta>0$.}
\KwResult{$V^*$- a solution of \eqref{MC:linearsub2nd}.}
\uIf{$b = -\delta\sigma_{\max}(A)$}{Compute 
\begin{equation*}
    V^* := \nbblo(A,C,\delta).
\end{equation*}
}
\Else{  Compute 
\vspace{-5pt}
\begin{align*}
    \lambda^* &\in \argmin_{\lambda \geq 0} \{\delta \sigma_{\max}(C+\lambda A) + b\lambda\}, \quad V^* := \nbblo (C+\lambda^* A, A,\delta).
\end{align*}
        \vspace{-5pt}}
\end{algorithm}
\subsection{Projection onto a nuclear norm ball.} \label{subsec:projectionNNB}
Given a matrix $X$ and its singular value decomposition as 
$X= \sum_{i\in [k]} \sigma_i u_i v_i^\top,$
in which $k = \min\{n,p\}$ and $\sigma_1 \geq \dots \geq \sigma_k \geq 0$. Let $s \in \mathbb{R}^k$ be the Euclidean projection of $(\sigma_1, \dots,\sigma_k)$ onto the set $S_{\delta}:=\{x \in \mathbb{R}^k \mid \mathbf{1}^\top x \leq \delta, x \geq 0\}$. \citet[Section 7.3.2]{Beck2017} shows that the Frobenius norm projection of $X$ onto the nuclear ball $\{V \in \mathbb{R}^{n \times p} \mid \|V\|_* \leq \delta\}$ is
$\sum_{i\in [k]} s_i u_i v_i^\top.$
Now we discuss how $s$, the projection of $(\sigma_1,\dots,\sigma_k)$ onto $S_{\delta}$, can be computed. When $\delta = 1$, \citet[Algorithm 1]{CondatLaurent2016} provides an efficient method for computing the projection onto $S_1$. For general $\delta > 0$, the projection onto $S_{\delta}$ can be computed as $\Proj_{S_\delta}(x) = \delta \Proj_{S_1}\left(x/\delta\right)$.

\subsection{Verification of \ref{ass:f-smooth} for \eqref{eq:outer-objective}}
From the definition of $f$, $f$ is a convex function over $\mathbb{R}^{n \times p}$. We define 
$P:=I_{n}-\mathbf{1}_{n}\mathbf{1}_{n}^\top/n$,
and observe that $P^\top P=P$.
We also have that $Px = x$ for any $x \in \mathbb{R}^n$ such that $x \perp \mathbf{1}_n$. Therefore, the largest singular value of $P$ is $1$.
Since $\nabla F(X) = P^\top PX = PX$ for any $X \in \mathbb{R}^{n \times p}$, we have that for any $X,Y \in \mathbb{R}^{n \times p}$,
$\|\nabla f(X)-\nabla f(Y)\|_F =  \left\| P (X-Y)\right\|_F$.
Let $z_1,\dots,z_p$ be the columns of $(X-Y)$. We observe that 
$\|P(X-Y)\|_F^2 = \sum_{j \in [p]}\|Pz_j\|_2^2\leq \sum_{j \in [p]} \|z_j\|_2^2 =  \|X-Y\|_F^2$.
Therefore, we have $\|\nabla f(X)-\nabla f(Y)\|_F  \leq \|X-Y\|_F$.

	\end{appendices}


\end{document}